\newtheorem{theorem}{Theorem}[section]
\newtheorem{definition}{Definition}[section]
\newtheorem{lemma}{Lemma}[section]
\newtheorem{proposition}{Proposition}[section]
\newtheorem{remark}{Remark}[section]
\newtheorem{corollary}{Corollary}[section]
\def \to{\rightarrow}
\def \norm{\|}
\def \abs{|}
\def \X{\mathcal{X}}
\def \E {\mathbb{E}}
\def \R {\mathbb{R}}
\def \N {\mathbb{N}}
\def \C {\mathcal{C}}
\def \B {\mathcal{B}}
\def \mes{\mathcal{P}}
\def \mystates{\mathcal{S}}
\def \d{\mathrm{d}}
\def \argmin{\mathrm{argmin}}
\def \e{\eta}
\def \y{\gamma}
\def \pos{x}
\def \posn{y}
\def \dt{\Delta t}
\def \time{\mathcal{T}}
\def \transition{\mathcal{K}_{\mystates,\time}}
\def \powH{q'}
\def \powL{q}
\def \convexset{\mathcal{C}}
\def \convexmset{\mes(\convexset)}
\def \varinC{x}
\title{Finite mean field games: fictitious play and convergence to a first order continuous mean field game}
\author{Saeed Hadikhanloo\thanks{CMAP, \'Ecole Polytechnique, CNRS, Universit\'e Paris Saclay, and INRIA, France (saeed.hadikhanloo@inria.fr).}  \and 
 Francisco J. Silva\thanks{TSE-R, UMR-CNRS 5314,  Universit\'e Toulouse I Capitole, 31015 Toulouse, France,  and Institut de recherche XLIM-DMI, UMR-CNRS 7252, Facult\'e des sciences et techniques, Universit\'e de Limoges, 87060 Limoges, France (francisco.silva@unilim.fr).}}
\date{}
\begin{document}
\maketitle

\begin{abstract}
In this article we consider finite Mean Field Games (MFGs), i.e. with finite time and finite states. We adopt the framework introduced in \cite{GomeMohrSouza} and study two seemly unexplored subjects. In the first one, we analyze the convergence of the fictitious play learning procedure, inspired by the results in continuous MFGs (see \cite{CDHD2015} and \cite{hadikhanloo2017learning}). In the second one, we consider the relation of some finite MFGs and continuous first order MFGs. Namely, given a continuous first order MFG problem and a sequence of refined space/time grids, we construct a sequence finite MFGs whose solutions admit limits points and every such limit point solves the continuous first order MFG problem.
\end{abstract}
{\bf Keywords:} Mean field games, finite time and finite state space, fictitious play, first order systems.
\section{Introduction} 
Mean Field Games (MFGs)  were introduced by Lasry and Lions  in  \cite{LL06cr1,LL06cr2,LL07mf} and, independently, by Huang, Caines and Malham\'e in \cite{HCMieeeAC06}. One of the main purposes of the theory is to develop a notion of Nash equilibria for dynamic games, which can be deterministic or stochastic, with an infinite number of players. More precisely, if we consider a $N$-player game and we assume that the players are indistinguishable and small, in the sense that a change of strategy of player $j$ has a small impact on the cost for player $i$, then, under some assumptions, it is possible to show that as $N\to \infty$ the sequence of equilibria  admits limit points (see \cite{CDLL}). The latter correspond to  probability measures on the set of actions and define the notion of equilibria with a continuum of agents. An interesting feature of the theory is that it allows to obtain important qualitative information on the equilibria and the resulting problem is amenable to numerical computation. We refer the reader to the lessons by P.-L. Lions \cite{cursolions} and to \cite{CDnotesonMFG,GLL2010,MR3195844,MR3559742} for surveys on the theory and its applications.

Most of the literature about MFGs deals with games in continuous time and where the agents are distributed on a continuum of states (see \cite{CDnotesonMFG}). In this article we consider a MFG problem where the number of states and times are finite. For the sake of simplicity, we will call {\it finite MFGs} the games of this type. This framework has been introduced by Gomes, Mohr and Souza in \cite{GomeMohrSouza}, where the authors prove results related to the existence and uniqueness of equilibria, as well as the convergence to a stationary equilibrium as time goes to infinity.  

Our contribution to these type of games is twofold. First, we analyze  the {\it fictitious play} procedure, which is a learning method for computing Nash equilibria in classical game theory, introduced by Brown in \cite{BrownFictitiousPlay}. We refer the reader to \cite[Chapter 2]{FL98} and the references therein for a survey on this subject. Loosely speaking,  the idea is that at each iteration, a typical player  implements a best response strategy to his {\it belief} on the action of the remaining players. The belief at iteration $n\in \N$ is given, by definition, by the average of outputs of decisions of the remaining players in the previous iterations $1, \hdots, n-1$. In the context of continuous MFGs,  the study of the convergence of such procedure to an equilibrium has been first addressed in \cite{CDHD2015}, for a particular class of MFGs called {\it potential MFGs}. This analysis has then been extended in \cite{hadikhanloo2017learning}, by assuming that the MFG is monotone, which means that agents have aversion to imitate the strategies of other players. Under an analogous monotonicity assumption, we prove in Theorem \ref{FirstorderMFGaslimitsoffiniteMFG} that the fictitious play procedure converges also in the case of finite MFGs.

Our second contribution concerns the relation between continuous and finite MFGs. We consider here a first order continuous MFG and we associate to it a family of finite MFGs defined on finite space/time grids. By applying the results in \cite{GomeMohrSouza}, we know that for any fixed space/time grid the associated finite MFG admits at least one solution. Moreover, any such solution induces a probability measure on the space of strategies. Letting the grid length tend  to zero, we prove that the aforementioned sequence of probability measures is precompact and, hence, has at least one limit point. The main result of this article is given in Theorem \ref{ConvergenceMainTheorem} and asserts that any such limit point is  an equilibrium of the continuous MFG problem. To the best of our knowledge, this is the first result relating the equilibria for continuous MFGs, introduced in \cite{LL07mf}, with the equilibria for finite MFGs, introduced in \cite{GomeMohrSouza}. 

The article is organized as follows. In Section \ref{preliminaries} we recall the finite MFG introduced in \cite{GomeMohrSouza} and we state our first assumption that ensures the existence of at least one equilibrium. In Section \ref{Study_of_the_discrete_system} we describe the fictitious play procedure for the finite MFG and prove its convergence under a monotonicity assumption on the data. In Section \ref{FirstorderMFGaslimitsoffiniteMFG} we introduce the first order continuous MFG under study, as well as the corresponding space/time discretization and the associated finite MFGs. As the length of the space/time grid tends to zero, we prove several asymptotic properties of the  finite MFGs equilibria   and we also prove our main result showing their convergence to a solution of the continuous MFG problem. \medskip\\

{\bf Acknowledgements}: The second author acknowledges financial support by the ANR project MFG ANR-16-CE40-0015-01 and the PEPS-INSMI Jeunes project “Some
open problems in Mean Field Games” for the years 2016 and 2017. Both authors acknowledge financial support by the PGMO project VarPDEMFG.
\section{The finite state and discrete time Mean Field Game problem}\label{preliminaries}

We begin this section by presenting the MFG problem introduced in \cite{GomeMohrSouza} with finite state and discrete time. Let  $\mystates$ be a finite set, and let $\time = \{ 0, \hdots , N\}$.
We denote by $|\mystates|$ the number of elements in $\mystates$, and by 
$$\mes(\mystates):= \left\{ m: \mystates \to [0,1] \; \big| \; \sum_{x \in \mystates} m(x) = 1 \right\},$$
the simplex in $\R^{|\mystates|}$, which is identified with the set of probability measures over $\mystates$. We define now the notion of transition kernel associated to $\mystates$ and $\time$. 
\begin{definition}\label{definition_transition_kernel}
We denote by $\mathcal{K}_{\mystates,\time}$ the set of all maps $P : \mystates \times \mystates \times \left(\time \setminus \{ N \}\right) \to [0,1]$, called the transition kernels, such that $P(x,\cdot,k)\in \mes(\mystates)$ for all $x\in \mystates$ and $k\in \time \setminus \{ N \}$.
\end{definition}
Note that $\mathcal{K}_{\mystates,\time}$ can be seen as a compact subset of $\R^{|\mystates| \times |\mystates| \times N}$.  Given an initial distribution $M_0 \in \mes(\mystates)$ and $P \in \mathcal{K}_{\mystates,\time}$, the pair $(M_0, P)$ induces a probability distribution over $\mystates^{N+1}$, with marginal distributions given by 
\begin{equation}\label{definition_M_P}
\begin{array}{rcl}
M_{P}^{M_0}(x_{0},0)&:=& M_0(x_{0}), \hspace{0.3cm} \forall \;  x_0 \in \mystates, \\[6pt]
M_{P}^{M_0}(x_{k}, k)&:=& \sum_{(x_0, x_1, \hdots, x_{k-1}) \in \mystates^{k}} M_0(x_0)\prod_{k'=0}^{k-1} P(x_{k'},x_{k'+1},t_{k'})  \hspace{0.3cm} \forall \; k=1,\hdots,N, \; \; x_k \in \mystates,
\end{array}
\end{equation}
or equivalently, written in a recursively form, 
\begin{equation}\label{definition_M_P_recursive}
\begin{array}{rcl}
M_{P}^{M_0}(x_{k},0)&:=& M_0(x_{0}), \hspace{0.3cm} \forall \;  x_0 \in \mystates, \\[6pt]
M_{P}^{M_0}(x_{k},k)&:=& \sum_{x_{k-1} \in \mystates} M_{P}^{M_0}(x_{k-1},k-1) P(x_{k-1},x_{k},k-1)  \hspace{0.3cm} \forall \; k=1,\hdots,N, \; \; x_k \in \mystates.
\end{array}
\end{equation}
Now, let  $c : \mystates \times \mystates \times \mes(\mystates) \times \mes(\mystates) \to \R$,  $g : \mystates \times \mes(\mystates) \to \R$, $M: \time \to \mes(\mystates)$ and define $J_M:\mathcal{K}_{\mystates,\time}\to \R$   as 
$$
J_M(P):= \sum_{k=0}^{N-1} \sum_{\pos,\posn\in \mystates} M_{P}^{M_0} (\pos,k ) P(\pos, \posn , k) c_{\pos\posn} ( P(\pos, k) , M(k)) + \sum_{\pos\in \mystates} M_{P}^{M_0} (\pos, N) g(\pos , M (N)),
$$
where, for notational convenience, we have set $c_{\pos\posn}(\cdot, \cdot):= c(x,y, \cdot,\cdot)$ and $P(x,k):= P(x,\cdot,k) \in \mes(\mystates)$. 
We consider the following MFG problem: find $\hat{P} \in \mathcal{K}_{\mystates,\time}$ such that 
\begin{equation}\label{finite_MFG}
\hat{P} \in \mbox{argmin}_{P \in  \mathcal{K}_{\mystates,\time}} \; J_{M}(P) \; \; \; \mbox{with } \;  M= M_{\hat{P}}^{M_0}. \tag{MFG$_d$}
\end{equation}
In order to rewrite \eqref{finite_MFG} in a recursive form (as in \cite{GomeMohrSouza}), given $k =0,\hdots, N-1$, $x \in \mystates$ and $P \in  \mathcal{K}_{\mystates,\time}$, we define a probability distribution in $\mystates^{N-k+1}$ whose marginals are given by
$$\begin{array}{rcl}
M_{P}^{x,k}(x_{k},k)&:=& \delta_{x,x_k}, \hspace{0.3cm} \forall \;  x_k \in \mystates, \\[6pt]
M_{P}^{x,k}(x_{k'},k')&:=& \sum_{x_{k'-1} \in \mystates} M_{P}^{x,k}(x_{k'-1},k'-1) P(x_{k'-1},x_{k'},k'-1)  \hspace{0.3cm} \forall \; k'=k+1,\hdots,N, \; \; x_{k'} \in \mystates,
\end{array}
$$
where $\delta_{x,x_k}:=1$ if $x=x_k$ and $\delta_{x,x_k}:=0$, otherwise. Given $M: \time \to \mes(\mystates)$,  we also set 
$$\begin{array}{rcl}
J_M^{x,k}(P)&:=& \sum_{k'=k}^{N-1} \sum_{\pos,\posn\in \mystates} M_{P}^{x,k}(x_{k'},k') P(\pos, \posn , k') c_{\pos\posn} ( P(\pos, k') , M(k')) + \sum_{\pos\in \mystates} M_{P}^{x,k} (\pos, N) g(\pos , M (N))   \\[6pt]
\;   & = & \sum_{y \in \mystates} P(x,y,k) \left( c_{xy}( P(x,k), M(k)) + J_M^{y,k+1}(P)\right).
\end{array}
$$
Since for every $M: \time \to \mes(\mystates)$ the function 
$$
U_{M}(x,k):= \inf_{P \in  \mathcal{K}_{\mystates,\time}}J_M^{x,k}(P) \hspace{0.3cm} \forall \; k=0, \hdots, N-1, \; x\in \mystates,  \hspace{0.5cm} U_{M}(x,N):=g(\pos , M (N)), \; \; \forall \; x\in \mystates,
$$
satisfies the Dynamic Programming Principle (DPP),
\begin{equation}\label{DPP_formula_for_UM}
U_M(\pos,k) = \inf_{p \in \mes(\mystates)} \sum_{\posn\in \mystates} p(y) \left[ c_{\pos\posn} ( p , M(k)) + U_M(\posn , k+1) \right],\hspace{0.3cm} \forall \; k=0, \hdots, N-1, \; x\in \mystates,
\end{equation}
problem \eqref{finite_MFG} is equivalent to find $U: \mystates \times \time \to \R$ and $M: \time \to \mes(\mystates)$ such that 
\begin{equation}\label{introsFiniteMFGNashE}
\begin{split}
{\rm(i)}\quad& \; U(\pos,k) =  \sum_{\posn\in \mystates} \hat{P}(\pos, y, k )  \left[c_{\pos\posn} (\hat{P}(\pos, k ) , M(k) ) + U(\posn , k+1) \right] ,  \hspace{0.3cm} \forall \; k=0, \hdots, N-1, \; \; \; x\in \mystates,\\ 
{\rm(ii)}\quad&M(\pos, k) = \sum_{\posn \in \mystates } M(y, k-1)\hat{P}(\posn , \pos , k-1), \hspace{0.3cm} \forall \; k=1, \hdots, N, \; \; \; x\in \mystates, \\
{\rm(iii)}\quad&U(\pos, N) = g(\pos , N), \hspace{0.5cm} M(x,0) = M_0(x) \hspace{0.5cm} \forall \; x \in \mystates,  
\end{split}
\end{equation}
where $\hat{P}\in  \mathcal{K}_{\mystates,\time}$ satisfies 
\begin{equation}\label{kernel_minimizer_one_step_DP}
\hat{P}(\pos, \cdot , k) \in \argmin_{p \in \mes(\mystates)} \sum_{\posn\in \mystates} p(y) \left[ c_{\pos\posn} ( p , M(k) ) + U(\posn , k+1) \right] , \hspace{0.3cm} \forall \; k=0, \hdots, N-1, \; x\in \mystates.
\end{equation}
 As in \cite{GomeMohrSouza}, we will assume that \medskip\\
{\bf(H1)}  The following properties hold true:
\begin{itemize}
\item[{\rm(i)}] For every $x \in \mystates$ the functions $g(x,\cdot)$  and $\mes(\mystates)\times \mes(\mystates)  \ni (p,M)  \mapsto \sum_{\posn\in \mystates} p(y)c_{\pos\posn} ( p , M)$ are continuous.
\item[{\rm(ii)}] For every $U: \mystates \to \R$, $M \in \mes(\mystates)$ and  $\pos\in \mystates$, the optimization problem  
\begin{equation}\label{dynamic_optimization_with_a_unique_solution}
\inf_{p \in \mes(\mystates)} \sum_{\posn\in \mystates} p(y) \left[ c_{\pos\posn} ( p , M) + U(y)\right],
\end{equation}
admits a unique solution $\hat{p}(x,\cdot)  \in \mes(\mystates)$.
\end{itemize}
\begin{remark}\label{consequences_assumption_section_2}{\rm(i)} By using Brower's fixed point theorem, it is proved in {\rm\cite[Theorem 5]{GomeMohrSouza}} that under {\bf(H1)}, problem \eqref{finite_MFG} admits at least one solution.\smallskip\\
{\rm(ii)} As a consequence of the DPP, we have that {\bf(H1)}{\rm(ii)} implies that for every $M: \time \to \mes(\mystates)$, problem 
$$
\inf_{P \in  \mathcal{K}_{\mystates,\time}} J_{M}(P)
$$
admits a unique solution. \smallskip\\
{\rm(iii)} An example running cost $c_{\pos\posn}$ satisfying that $\mes(\mystates)\times \mes(\mystates)  \ni (p,M)  \mapsto \sum_{\posn\in \mystates} p(y)c_{\pos\posn} ( p , M)$ is continuous and {\rm {\bf (H1)}(ii)} is given by
\begin{equation}\label{entropy_penalized_cost}
c_{\pos\posn} ( p , M) := K(\pos,\posn,M) + \epsilon \log (p(y))
\end{equation}
where $\epsilon >0$, $K(\pos , \posn , \cdot)$ is continuous for all $\pos$, $\posn \in \mystates$, with the convention that $0 \log 0=0$. This type of cost has been already considered in {\rm\cite{GomeMohrSouza}}, and, given $x\in \mystates$,  the unique solution  of \eqref{dynamic_optimization_with_a_unique_solution} is given by
\begin{equation}\label{optimal_transition_probability} \hat{p}(\pos,y)=    \frac{\exp \left( -\left[K(\pos,\posn,M) + U(y)\right]/\epsilon \right)}{\sum_{\posn' \in \mystates} \exp \left(- \left[K(\pos,\posn',M) + U(y')\right]/\epsilon\right)}.
\end{equation}
In Section \ref{FirstorderMFGaslimitsoffiniteMFG} we will consider this type of cost in order to approximate continuous MFGs by finite ones. 
\end{remark} 
%
%
%

\section{Fictitious play for the finite MFG system}\label{Study_of_the_discrete_system}

Inspired by the fictitious play procedure introduced for continuous MFGs in \cite{hadikhanloo2017learning}, we consider in this section the convergence problem for the sequence of functions transition kernels $P_n \in \transition$ and marginal distributions $M_n: \time \to \mes(\mystates)$ constructed as follows: given $M_1 : \time \to \mes(\mystates)$ arbitrary, set $\bar{M}_1= M_1$ and,  for $n\geq 1$,  define 
\begin{equation}\label{fictitous_play_recurrence}
\begin{array}{rcl}
P_n&:=& \mbox{argmin}_{P \in  \mathcal{K}_{\mystates,\time}} \; J_{\bar{M}_n}(P),\\[6pt]
 M_{n+1}(\cdot,k)&:=& M_{P_n}^{M_0}(\cdot,k) , \hspace{0.6cm} \forall \; k=0,\hdots,N,  \\[6pt]
 \bar{M}_{n+1}(\cdot,k)&:=&  \frac{n}{n+1} \bar{M}_{n}(\cdot, k) + \frac{1}{n+1} M_{n+1}(\cdot, k), \hspace{0.6cm} \forall \; k=0,\hdots,N,
\end{array}
\end{equation}
where we recall that $M_0$ is given and for $P\in \transition$, the function $M_{P}^{M_0}: \mystates \times \time \to [0,1] $ is defined by \eqref{definition_M_P} (or recursively by \eqref{definition_M_P_recursive}). Note that by Remark \ref{consequences_assumption_section_2}{\rm(ii)}, the sequences $(P_n)$ and $(M_n)$ are well defined under {\bf(H1)}. 

The main object of this section is to show that, under suitable conditions, the sequence $(P_n)$ converges to a solution $\hat{P}$ to \eqref{finite_MFG} and $(M_n)$ converges to $M_{\hat{P}}^{M_0}$, i.e. the marginal distributions at the equilibrium. In practice, in order to compute $M_{n+1}$ from $\bar{M}_{n}$, we find first $P_n$ backwards in time by using the DPP expression for $U_{\bar{M}_n}$ in \eqref{DPP_formula_for_UM} and then we compute $M_{n+1}$ forward in time by using  \eqref{definition_M_P_recursive}. Notice that both computations are explicit in time. 
%

\subsection{Generalized fictitious play}  For the sake of simplicity, we present here an abstract framework that will allow us to prove the convergence of the sequence constructed in \eqref{fictitous_play_recurrence}. We begin by introducing some notations that will be also used in Section \ref{FirstorderMFGaslimitsoffiniteMFG}.
%
Let $\X$ and $\mathcal{Y}$  be two Polish spaces and $\Psi: \X  \to \mathcal{Y}$  be a Borel measurable function. Given a Borel probability measure $\mu$ on $\X$, we denote by $\Psi \sharp \mu$ the probability measure on $\mathcal{Y}$ defined by $\Psi \sharp \mu (A):= \mu(\Psi^{-1}(A))$ for all $A \in \B(\mathcal{Y})$.  Denoting by $\mes(\X)$ the set of Borel probability measures on $\X$ and by $d$ the metric on $\X$, we set  $\mes_{p}(\X)$ for the subset of $\mes(\X)$ consisting on measures $\mu$ such that $\int_{\X}d(x,x_0)^p \d \mu(x) < + \infty$ for some $x_0\in \X$. For $\mu_1$, $\mu_2 \in \mes_{p}(\X)$ define
$$\Pi (\mu_1 , \mu_2) := \{ \; \gamma \in \mes(\X \times \X) \; \big| \; \rho \sharp \pi_{1} = \mu_1 \hspace{0.2cm} \mbox{and } \hspace{0.2cm} \rho \sharp \pi_{2} = \mu_2\},$$
where $\pi_{1}$, $\pi_{2}: \X \times \X \to \R$, are defined by $\pi_{i}(x_1,x_2):= x_i$ for $i=1$, $2$. Endowed with the Monge-Kantorovic metric 
 $$\d_p (\mu_1 , \mu_2) = \inf_{ \gamma \in \Pi (\mu_1 , \mu_2) } \left( \int_{\X \times \X} \d  (x,y)^p \; \d \gamma(x,y) \right)^{1/p},$$
the set $\mes_p(\X)$ is shown to be a Polish space (see e.g. \cite[Proposition 7.1.5]{AGS}). Let us recall that $d_1$ corresponds to the Kantorovic-Rubinstein metric, i.e. 
\begin{equation}\label{distance_1_difference_lipschitz}
d_{1}(\mu_1, \mu_2)=  \sup\left\{ \int_{\X} f (x) \d (\mu_1-\mu_2)(x) \; ; \; f \in \mbox{Lip}_{1}(\R^d)\right\},
\end{equation}
where $ \mbox{Lip}_{1}(\X)$ denotes the set of Lipschitz functions defined in $\X$ with Lipschitz constant less or equal than $1$ (see e.g. \cite{Villani03}).   

 Let $\convexset \subseteq \X$ be a compact set. Then, by definition,  $\convexmset=\mes_p(\mathcal{C})$ for all $p \geq 1$, and $d_p$ metricizes the weak convergence of probability measures on $\mathcal{C}$ (see e.g. \cite[Proposition 7.1.5]{AGS}). Moreover, the set $\mes(\mathcal{C})$ is compact. 

Now, let $F:\convexset \times \convexmset \to \R$ be a given continuous function. Given $\varinC_{1}\in \mathcal{C}$ set $\bar{\eta}_1:= \delta_{x_1}$, the Dirac mass at $x_1$, and  for $n\geq 1$ define:
\begin{equation}\label{GenFP}
\varinC_{n+1} \in \argmin_{\varinC \in \convexset} F(\varinC,\bar \e_n), \quad \bar \e_{n+1} = \frac{1}{n+1} \sum_{k=1}^{n+1} \delta_{\varinC_{k}} = \frac{n}{n+1} \bar \e_n + \frac{1}{n+1}\delta_{\varinC_{n+1}}.
\end{equation}
We consider now the convergence problem of  the sequence $(\bar \e_{n})$  to some $\tilde{\e}\in \convexmset$ satisfying that
\begin{equation}\label{GenNE}
\mbox{supp}(\tilde{\eta}) \subseteq \argmin_{x\in \convexset} F(x,\tilde{\e}),
\end{equation}
where $\mbox{supp}(\tilde{\eta})$ denotes the support of the measure $\tilde{\eta}$. We call  such  $\tilde{\e}$  an equilibrium and its existence can be easily proved  by using Fan's fixed point theorem. 

We will prove the convergence of $(\tilde{\eta}_n)$ under a monotonicity and unique minimizer condition  for $F$.

\begin{definition}[Monotonicity]\label{GenMonotonicity} 
The function $F$ is called monotone, if  
\begin{equation}\label{montonictyinequality}
\int_{\convexset} \left( F(\varinC,\mu_1) - F(\varinC,\mu_2) \right) \; \d (\mu_1-\mu_2)(\varinC) \geq 0, \hspace{0.5cm} \forall \; \mu_1,  \; \mu_2 \in \mes(\mathcal{C}), \; \; \; \mu_1 \neq \mu_2.
\end{equation}
Moreover, $F$ is called strictly monotone if the inequality in \eqref{montonictyinequality} is strict. 
\end{definition}
\begin{definition}[Unique minimizer condition]\label{GenUniqueminimizer} The function $F$ satisfies the unique minimizer condition if for every $\e \in \convexmset$ the optimization problem $\inf_{\varinC \in \convexset} F(\varinC,\e)$ admits a unique solution.
\end{definition}
The following remark states some elementary consequence of the previous definitions.
\begin{remark}\label{GenUniquenessofEquilibrium}
{\rm(i)} If the unique minimizer condition holds then any equilibrium must be a Dirac mass. Moreover, the application $\mes(\mathcal{C}) \ni \eta \mapsto x_\eta:= \mbox{{\rm argmin}}_{x\in \mathcal{C}} F(x,\eta) \in \mathcal{C}$ is well defined and uniformly continuous. \\
{\rm(ii)} If $F$ is  monotone and the unique minimizer condition holds  then the equilibrium must be unique. Indeed, suppose that there are two different equilibria $\tilde{\e} = \delta_{\tilde{\varinC}}$ and $\tilde{\e}' = \delta_{\tilde{\varinC}'}$. Then, by the unique minimizer condition, 
$$
F(\tilde{\varinC} , \delta_{\tilde{\varinC}} ) <
F(\tilde{\varinC}' , \delta_{\tilde{\varinC}} ), \quad \mbox{and } \; \; \; 
F(\tilde{\varinC}' , \delta_{\tilde{\varinC}'} ) <
F(\tilde{\varinC} , \delta_{\tilde{\varinC}'} ).
$$
This gives $\int_{\convexset} \left( F(\varinC,\delta_{\tilde{\varinC}}) - F(\varinC,\delta_{\tilde{\varinC}'}) \right) \; \d (\delta_{\tilde{\varinC}}-\delta_{\tilde{\varinC}'})(\varinC) < 0,
$ which contradicts the monotonicity assumption. 

Arguing as in {\rm\cite[Proposition 2.9]{CDnotesonMFG})}, it is easy to see that uniqueness of the equilibrium also holds if $F$ is strictly monotone but does not necessarily satisfy the unique minimizer condition. 
\end{remark}

\begin{theorem}\label{Generalisedfictitiousplay}
Assume that
\begin{itemize}
\item[{\rm(i)}] $F$ is monotone and satisfies the unique minimizer condition.
\item[{\rm(ii)}] $F$ is Lipschitz, when $\mes(\mathcal{C})$ is endowed with the distance $\d_1$, and there exists $C>0$ such that 
\begin{equation}\label{change_of_points_in_F_estimate}\abs F(\varinC_1 , \e_1) - F(\varinC_1 , \e_2) - F(\varinC_2 , \e_1) + F(\varinC_2 , \e_2) \abs \leq C \left| \varinC_1 - \varinC_2 \right| \d_1 (\e_1,\e_2),\end{equation}
for all $x_1$, $x_2 \in \mathcal{C}$, and  $\mu_1$, $\mu_2 \in \mes(\convexset)$
\end{itemize}
Then, there exists $\tilde{\varinC} \in \convexset$ such that $\tilde{\e} =\delta_{\tilde{\varinC}}$ is the unique equilibrium and  the sequence $(\varinC_n , \bar \e_n)$ defined by  \eqref{GenFP} converges to $(\tilde{\varinC}, \delta_{\tilde{\varinC}})$.
\end{theorem}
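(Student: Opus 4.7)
The plan is to reduce the joint convergence of $(\varinC_n, \bar\e_n)$ to the vanishing of the Nash gap
$$\phi_n \;:=\; \int_\convexset F(\varinC, \bar\e_n) \, \d\bar\e_n(\varinC) - F(\varinC_{n+1}, \bar\e_n) \;\ge\; 0,$$
which measures how far $\bar\e_n$ is from being an equilibrium. By Remark \ref{GenUniquenessofEquilibrium}(ii) the equilibrium $\tilde\e = \delta_{\tilde\varinC}$ is unique, while by the unique minimizer condition $\varinC_{n+1}$ coincides with the point $x_{\bar\e_n}$ from Remark \ref{GenUniquenessofEquilibrium}(i); the uniform continuity of $\e \mapsto x_\e$ stated there then implies that $\bar\e_n \to \tilde\e$ in $(\mes(\convexset), \d_1)$ automatically entails $\varinC_n \to \tilde\varinC$. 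Thus the theorem splits into two steps: (a) show $\phi_n \to 0$; (b) deduce $\bar\e_n \to \tilde\e$ from (a).

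Step (b) is a consequence of monotonicity alone. Taking $\mu_1 = \bar\e_n$ and $\mu_2 = \tilde\e = \delta_{\tilde\varinC}$ in \eqref{montonictyinequality}, expanding, and using that $\tilde\varinC$ minimizes $F(\cdot, \tilde\e)$, one obtains
$$\phi_n \;\ge\; \int_\convexset \bigl[ F(\varinC, \tilde\e) - F(\tilde\varinC, \tilde\e) \bigr] \, \d\bar\e_n(\varinC) \;\ge\; 0.$$
The integrand is continuous on the compact $\convexset$, nonnegative, and vanishes only at $\tilde\varinC$ by the unique minimizer condition, so standard tightness arguments force $\bar\e_n \to \delta_{\tilde\varinC}$ weakly (equivalently in $\d_1$) as soon as $\phi_n \to 0$.

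The heart of the proof is step (a). From $\bar\e_{n+1} = \tfrac{n}{n+1}\bar\e_n + \tfrac{1}{n+1}\delta_{\varinC_{n+1}}$ one has $\d_1(\bar\e_{n+1}, \bar\e_n) \le \mathrm{diam}(\convexset)/(n+1)$; the Lipschitz hypothesis in (ii) gives pointwise $|F(\cdot, \bar\e_{n+1}) - F(\cdot, \bar\e_n)| = O(1/n)$, while the cross-difference estimate \eqref{change_of_points_in_F_estimate} upgrades this to a Lipschitz-in-$\varinC$ bound of order $1/n$ on the increment $\Delta_n(\varinC) := F(\varinC, \bar\e_{n+1}) - F(\varinC, \bar\e_n)$. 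Combining these controls with the optimality of $\varinC_{n+1}$ against $\bar\e_n$, a direct computation yields a recursion of the form $(n+1)\phi_{n+1} \le n \phi_n + R_n$ whose remainder $R_n$ is bounded by using monotonicity applied to the pair $(\bar\e_{n+1}, \bar\e_n)$: this yields $\Delta_n(\varinC_{n+1}) \ge \int \Delta_n \, \d\bar\e_n$, the sign needed to absorb the drift of the best response into telescoping or summable contributions. Summing the recursion and invoking a Ces\`aro-type argument then gives $\phi_n \to 0$. The main obstacle is precisely this tight recursion: the term $F(\varinC_{n+1}, \bar\e_{n+1}) - \min_\varinC F(\varinC, \bar\e_{n+1})$ arising from the drift between two consecutive best responses is a priori only $o(1)$ via the modulus of continuity of $\e \mapsto x_\e$, and converting it into a summable error requires the joint use of the cross-difference estimate (for the $1/n$ scaling) and of monotonicity (for the sign).
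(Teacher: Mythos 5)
Your proposal is correct and follows essentially the same route as the paper: the Nash gap $\phi_n$, the recursion $(n+1)\phi_{n+1}\le n\phi_n+R_n$ with a Ces\`aro summation (the paper's Lemma \ref{LEMFICT}), the monotonicity inequality $\Delta_n(\varinC_{n+1})\ge\int_{\convexset}\Delta_n\,\d\bar\e_n$ and the cross-difference estimate \eqref{change_of_points_in_F_estimate} combined with $\d_1(\bar\e_{n+1},\bar\e_n)=O(1/n)$ are exactly the ingredients used there. The only (harmless) variation is your step (b), where you deduce $\bar\e_n\to\delta_{\tilde\varinC}$ quantitatively from $\phi_n\ge\int_{\convexset}\bigl[F(\varinC,\tilde\e)-F(\tilde\varinC,\tilde\e)\bigr]\,\d\bar\e_n(\varinC)\ge 0$, whereas the paper passes to limit points via compactness of $\mes(\convexset)$ and invokes uniqueness of the equilibrium.
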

Before we prove the theorem, let us recall a preliminary result (see \cite{hadikhanloo2017learning}).
\begin{lemma}\label{LEMFICT}
Consider a sequence of real numbers $(\phi_n)$ such that $\liminf_{n\to \infty} \phi_n \geq 0$. If there exists a real sequence $(\epsilon_n)$ such that $\lim_{n \to \infty} \epsilon_n = 0$ and 
$$\phi_{n+1} - \phi_n \leq -\frac{1}{n+1} \phi_n + \frac{\epsilon_n}{n}, \hspace{0.5cm} \forall \; n \in \N,$$
then $\lim_{n \to \infty} \phi_n = 0$.
\end{lemma}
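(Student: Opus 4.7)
The plan is to convert the given recursive inequality into a telescoping form by a suitable rescaling, then obtain an upper bound on $\phi_n$ via a Cesàro averaging argument, and finally combine that bound with the hypothesis $\liminf \phi_n \geq 0$ to squeeze $\phi_n$ to zero.

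First I would multiply the hypothesis $\phi_{n+1}-\phi_n \leq -\frac{1}{n+1}\phi_n + \frac{\epsilon_n}{n}$ through by $(n+1)$. Rearranging, this gives
$$(n+1)\phi_{n+1} - n\phi_n \leq \frac{(n+1)\epsilon_n}{n}.$$
Setting $a_n := n\phi_n$ this is simply $a_{n+1}-a_n \leq \frac{(n+1)\epsilon_n}{n}$, which is now telescoping. Summing from $k=1$ to $n-1$ yields
$$a_n \leq a_1 + \sum_{k=1}^{n-1}\frac{(k+1)\epsilon_k}{k},$$
and dividing by $n$ gives the pointwise bound
$$\phi_n \leq \frac{\phi_1}{n} + \frac{1}{n}\sum_{k=1}^{n-1}\frac{(k+1)\epsilon_k}{k}.$$

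Next I would let $n\to\infty$ in this bound. The first term vanishes trivially. For the second, observe that $\frac{(k+1)\epsilon_k}{k} = \left(1+\frac{1}{k}\right)\epsilon_k \to 0$ as $k\to\infty$, so by the classical Cesàro mean theorem the average $\frac{1}{n}\sum_{k=1}^{n-1}\frac{(k+1)\epsilon_k}{k}$ also tends to $0$. This yields $\limsup_{n\to\infty}\phi_n \leq 0$. Combined with the assumption $\liminf_{n\to\infty}\phi_n \geq 0$, we conclude $\lim_{n\to\infty}\phi_n = 0$.

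There is essentially no serious obstacle: the proof is a short two-step manipulation. The only point requiring a bit of care is that the recursive inequality only provides an upper bound on $\phi_n$, which is why the hypothesis $\liminf \phi_n \geq 0$ (rather than being a consequence) is necessary to rule out $\phi_n$ drifting to $-\infty$; the telescoping substitution $a_n = n\phi_n$ is the key trick that turns the $-\frac{1}{n+1}\phi_n$ damping term on the right-hand side into a clean difference of consecutive terms on the left.
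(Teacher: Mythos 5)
Your proof is correct and follows essentially the same route as the paper: both rescale via $b_n = n\phi_n$ (your $a_n$), telescope the resulting inequality, divide by $n$, and conclude by a Ces\`aro averaging argument combined with $\liminf \phi_n \geq 0$. The only cosmetic difference is that the paper first bounds $(n+1)\epsilon_n/n \leq 2|\epsilon_n|$ before summing, whereas you apply the Ces\`aro theorem directly to the signed terms $(1+\tfrac{1}{k})\epsilon_k$; both are valid.
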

\begin{proof}
Let $b_n = n \phi_n$ for every $n \in \N$. We have 
$$\frac{b_{n+1}}{n+1} - \frac{b_{n}}{n} \leq - \frac{b_{n}}{n(n+1)} + \frac{\epsilon_n}{n}, \hspace{0.5cm} \forall \; n \in \N,$$
which implies that $b_{n+1} \leq b_n + (n+1) \epsilon_n /n \leq b_n + 2 \abs \epsilon_n \abs$. Then, we get $b_n \leq b_1 + 2\sum_{i=1}^{n-1} \abs \epsilon_i \abs $ and, hence,  
$$0 \leq \liminf_{n\to \infty} \phi_n \leq \limsup_{n\to \infty} \phi_n \leq \lim_{n\to \infty} \frac{b_1 + 2 \sum_{i=1}^{n-1} \abs \epsilon_i \abs }{n} = 0,$$
from which the result follows.
\end{proof}
\begin{proof}[Proof of Theorem \ref{Generalisedfictitiousplay}.]
Let us define the real sequence $(\phi_n)$ as
$$\phi_n := \int_{\convexset} F(\varinC,\bar \e_n)  \d \bar \e_n(\varinC) - F(\varinC_{n+1},\bar \e_n).$$
We claim that $\phi_n \to 0$. Assuming that the claim is true, then any limit point $(\tilde{x}, \tilde{\eta})$ of $(x_{n+1}, \bar \e_n)$ satisfies  
$$
F(\tilde{x}, \tilde{\eta}) \leq F(x, \tilde{\eta}) \hspace{0.3cm} \forall \; x\in \mathcal{C}, \hspace{0.4cm} \mbox{and } F(\tilde{x}, \tilde{\eta}) = \int_{\convexset} F(\varinC,\tilde{\eta})  \d \tilde{\eta}(\varinC),
$$
which implies that $\tilde{\eta}$ satisfies \eqref{GenNE}, i.e. $\tilde{\eta}$ is an equilibrium. Using that $F$ is monotone and Remark \ref{GenUniquenessofEquilibrium}{\rm(ii)}, the assertions on the theorem follows. 

Thus, it remains to show that $\phi_n \to 0$, which will be proved with the help of Lemma \ref{LEMFICT}. By definition of $x_{n+1}$ we have that $\phi_n \geq 0$. Let us write $\phi_{n+1} - \phi_n = A+B$, where
$$A=\int_{\convexset} F(\varinC,\bar \e_{n+1}) \; \d \bar \e_{n+1}(\varinC) - \int_{\convexset} F(\varinC,\bar \e_n) \; \d \bar \e_n(\varinC), \quad B =  F(\varinC_{n+1},\bar \e_n) - F(\varinC_{n+2},\bar \e_{n+1}).$$
We have
\begin{equation}\label{estimate_for_B}
\begin{split}
B &\leq F(\varinC_{n+2},\bar \e_n) - F(\varinC_{n+2},\bar \e_{n+1}) \\[6pt]
&\leq F(\varinC_{n+1},\bar \e_n) - F(\varinC_{n+1},\bar \e_{n+1}) + C | \varinC_{n+2} - \varinC_{n+1} | \d_1 (\bar \e_n , \bar \e_{n+1}) \\
&\leq F(\varinC_{n+1},\bar \e_n) - F(\varinC_{n+1},\bar \e_{n+1}) + \frac{C}{n+1} | \varinC_{n+2} - \varinC_{n+1} | \d_1 (\delta_{x_{n+1}} , \bar \e_{n}) ,
\end{split}
\end{equation}
where we have used  \eqref{change_of_points_in_F_estimate} to pass from the first to the second inequality and \eqref{distance_1_difference_lipschitz} from the second to the third inequality. Similarly, using \eqref{GenFP} and that $F$ is Lipschitz,
\begin{equation}\label{estimate_for_A}
\begin{split}
A&= \int_{\convexset} (F(\varinC,\bar \e_{n+1}) - F(\varinC,\bar \e_n)) \; \d \bar \e_n(\varinC) + \frac{1}{n+1} \left[F(\varinC_{n+1},\bar \e_{n+1}) - \int_{\convexset} F(\varinC,\bar \e_{n+1}) \; \d \bar \e_{n}(\varinC) \right] \\
&\leq \int_{\convexset} (F(\varinC,\bar \e_{n+1}) - F(\varinC,\bar \e_n)) \; \d \bar \e_n(\varinC)  + \frac{1}{n+1} \left[F(\varinC_{n+1},\bar \e_{n}) - \int_{\convexset} F(\varinC,\bar \e_{n}) \; \d \bar \e_{n}(\varinC) \right] +  \frac{C}{n+1} \d_1 (\bar \e_n , \bar \e_{n+1}) \\
&\leq \int_{\convexset} (F(\varinC,\bar \e_{n+1}) - F(\varinC,\bar \e_n)) \; \d \bar \e_n(\varinC) - \frac{1}{n+1} \phi_{n} + \frac{C}{(n+1)^2} \d_1 (\bar \e_n , \delta_{x_{n+1}}). \\
\end{split}
\end{equation}
On the other hand, the second relation in \eqref{GenFP} yields $-(n+1)( \bar{\eta}_{n+1}-\bar{\eta}_{n})=  \bar{\eta}_{n}-\delta_{x_{n+1}}$. Therefore, 
\begin{equation}\label{monotonicity_inside_the_proof}
\begin{split}
F(\varinC_{n+1},\bar \e_n) - F(\varinC_{n+1},\bar \e_{n+1})  + \int_{\convexset} (F(\varinC,\bar \e_{n+1}) - F(\varinC,\bar \e_n)) \; \d \bar \e_{n}(\varinC) = \\
-(n+1) \int_{\convexset} (F(\varinC,\bar \e_{n+1}) - F(\varinC,\bar \e_n)) \; \d (\bar \e_{n+1} - \bar \e_{n})(\varinC) \leq 0,
\end{split}
\end{equation}
by the monotonicity condition of $F$. From estimates \eqref{estimate_for_B}-\eqref{estimate_for_A} and inequality \eqref{monotonicity_inside_the_proof}  we deduce that
\begin{equation}
\begin{split}
\phi_{n+1} - \phi_{n} &\leq - \frac{1}{n+1} \phi_{n} + \frac{C}{n+1} \d_1 (\delta_{x_{n+1}}, \bar \e_{n}) \left( \frac{1}{n+1} + | \varinC_{n+2} - \varinC_{n+1} | \right).
\end{split}
\end{equation}
Using that $\mes(\mathcal{C})$ is compact (and so bounded in $\d_1$), we get that  
$$
\phi_{n+1} - \phi_{n} \leq - \frac{1}{n+1} \phi_{n} +\frac{\epsilon_n}{n},
$$
where $\epsilon_n := C' (\frac{1}{n+1} + | \varinC_{n+2} - \varinC_{n+1} |)$, with $C'>0$ and  independent of $n$.  Remark \ref{GenUniquenessofEquilibrium} implies that $| \varinC_{n+2} - \varinC_{n+1} | \to 0$ as $n\to \infty$ (because $\d_1 (\bar \e_{n},\bar{\e}_{n+1})= \d_1 (\bar \e_{n},\delta_{x_{n+1}})/(n+1) \to 0$). Thus, $\epsilon_{n} \to 0$ and the result follows from Lemma \ref{LEMFICT}.
\end{proof}

\subsection{Convergence of the fictitious play for finite MFG}
In this section, we apply the abstract result in 	Theorem \ref{Generalisedfictitiousplay} to the finite MFG problem \eqref{finite_MFG}. Under the notations of Section \ref{preliminaries}, in what follows,  will assume that $c_{xy}(\cdot, \cdot)$ has a separable form. Namely, 
\begin{equation}\label{SeperableCostFunctionFiniteMFG}
c_{\pos\posn} ( p , M) = K(\pos,\posn,p) + f(\pos,M), \hspace{0.4cm} \forall \; x, \; y \in \mystates, \; \; p, \; M \in \mes(\mystates),
\end{equation} 
where $K : \mystates \times \mystates \times \mes(\mystates)  \to \R$ and $f: \mystates \times \mes(\mystates) \to \R$ are given. 
In order to write \eqref{finite_MFG} as a particular instance of \eqref{GenNE}, given $\eta \in \mes(\mathcal{K}_{\mystates,\time})$ we define $M_{\e}:= \time \to \mes(\mystates)$ and $F: \mathcal{K}_{\mystates,\time} \times \mes(\mathcal{K}_{\mystates,\time}) \to \R$ as 
\begin{equation}\label{GenFormofCostFunction}
M_{\e}(k) := \int_{\mathcal{K}_{\mystates,\time}} M_{P}^{M_0} (k) \;\d \e(P),  \hspace{0.3cm} \forall \; k=0, \hdots, N, \; \; \; \; \mbox{and } \hspace{0.4cm}  F(P,\e) := J_{M_\e}(P).
\end{equation}

Under assumption {\bf(H1)}, we have that $F$ is continuous and satisfies the unique minimizer condition in Definition \ref{GenUniqueminimizer}. Therefore, by Remark \ref{GenUniquenessofEquilibrium}{\rm(i)}, associated to any equilibrium $\eta \in \mes(\mathcal{K}_{\mystates,\time})$ for $F$, i.e. $\eta$ satisfies \eqref{GenNE} with $\mathcal{C}= \mathcal{K}_{\mystates,\time}$,  there exists  $P_\eta \in \mathcal{K}_{\mystates,\time}$ such that $\eta= \delta_{P_\eta}$, from which we get that $P_\eta$ solves \eqref{finite_MFG}. Conversely, for any solution $P$ to \eqref{finite_MFG} we can associate the measure $\eta_{P}:= \delta_{P}$, which solves  \eqref{GenFP}. An analogous argument shows that the fictitious play procedures \eqref{fictitous_play_recurrence} and \eqref{GenFP} are equivalent. 

We consider now some assumptions on the data of the finite MFG problem that will ensure the validity of assumptions {\rm(i)}-{\rm(ii)} for $F$ in Theorem \ref{Generalisedfictitiousplay}. \medskip\\
{\bf(H2)} We assume that 
\begin{itemize}
\item[{\rm(i)}] $f$ and $g$ are monotone, in the sense that setting $h=f$, $g$, we have
$$
\sum_{x \in \mystates} \left(h(x,M) -h(x,M')\right)(M(x) - M'(x)) \geq 0 \hspace{0.5cm} \forall \; M, \; M' \in \mes(\mystates). 
$$
\item[{\rm(ii)}] $f$ and $g$ are Lipschitz with respect to their second argument. 
\end{itemize}

The following result is a straightforward consequence of the definitions. 
%
\begin{lemma}\label{monotonicity_finite_mfg}
If $f$ and $g$ are monotone, then  $F$ is monotone  in sense of Definition \ref{GenMonotonicity}.
\end{lemma}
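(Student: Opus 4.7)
The plan is to unpack the definition of $F$, observe that only the $f$- and $g$-parts of the cost interact with the measure argument $\eta$, and then reduce the monotonicity inequality for $F$ to the pointwise monotonicity hypotheses on $f$ and $g$.

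First I would substitute the separable form \eqref{SeperableCostFunctionFiniteMFG} into $F(P,\eta) = J_{M_\eta}(P)$. Using $\sum_{y}P(x,y,k)=1$, this produces
$$F(P,\eta) = G(P) + \sum_{k=0}^{N-1}\sum_{x\in\mystates} M_P^{M_0}(x,k)\, f(x,M_\eta(k)) + \sum_{x\in\mystates} M_P^{M_0}(x,N)\, g(x,M_\eta(N)),$$
where $G(P)$ collects the purely kinetic term $\sum_{k,x,y} M_P^{M_0}(x,k) P(x,y,k) K(x,y,P(x,k))$ and does not depend on $\eta$. Hence for any $\mu_1,\mu_2 \in \mes(\mathcal{K}_{\mystates,\time})$, the difference $F(P,\mu_1) - F(P,\mu_2)$ does not involve $G(P)$, and only the $f$- and $g$-contributions survive.

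Next I would integrate against $\d(\mu_1-\mu_2)(P)$ and swap the finite sums with the integral. Using the definition $M_\eta(x,k) = \int_{\mathcal{K}_{\mystates,\time}} M_P^{M_0}(x,k)\, \d\eta(P)$ from \eqref{GenFormofCostFunction}, the $P$-integral yields exactly $M_{\mu_1}(x,k) - M_{\mu_2}(x,k)$ for each $(x,k)$. Consequently,
\begin{equation*}
\begin{split}
\int_{\mathcal{K}_{\mystates,\time}}\!\!\bigl(F(P,\mu_1)-F(P,\mu_2)\bigr)\d(\mu_1-\mu_2)(P)
&= \sum_{k=0}^{N-1}\sum_{x\in\mystates}\bigl(f(x,M_{\mu_1}(k))-f(x,M_{\mu_2}(k))\bigr)\bigl(M_{\mu_1}(x,k)-M_{\mu_2}(x,k)\bigr) \\
&\quad +\sum_{x\in\mystates}\bigl(g(x,M_{\mu_1}(N))-g(x,M_{\mu_2}(N))\bigr)\bigl(M_{\mu_1}(x,N)-M_{\mu_2}(x,N)\bigr).
\end{split}
\end{equation*}

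Finally, each of the $N+1$ bracketed sums is nonnegative by the monotonicity assumption {\bf(H2)}(i) applied with $M=M_{\mu_1}(k)$ and $M'=M_{\mu_2}(k)$ (and similarly at $k=N$ for $g$). Summing nonnegative quantities gives the required inequality, proving that $F$ is monotone in the sense of Definition \ref{GenMonotonicity}.

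There is no real obstacle here; the only care needed is to check that the Fubini-type interchange of the finite sum over $(x,k)$ with the integral over $\mathcal{K}_{\mystates,\time}$ is legitimate, which is immediate since $\mathcal{K}_{\mystates,\time}$ is compact and the integrands $P\mapsto M_P^{M_0}(x,k)$ are bounded and continuous.
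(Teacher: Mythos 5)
Your proof is correct and follows essentially the same route as the paper's: substitute the separable cost, note that the kinetic part cancels in $F(P,\mu_1)-F(P,\mu_2)$, interchange the finite sums with the integral against $\d(\mu_1-\mu_2)$ to produce the differences $M_{\mu_1}(x,k)-M_{\mu_2}(x,k)$, and invoke the monotonicity of $f$ and $g$ termwise. No gaps.
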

\begin{proof}
For any two distributions $\e,\e' \in \convexmset$ we want to show $
\int_{\convexset} \left( F (P,\eta) - F (P,\eta') \right) \; \d (\eta - \eta')(P) \geq 0.
$
By using the exact form of the cost function $F$ by equation \eqref{GenFormofCostFunction} and taking into account the separable form of the running cost \eqref{SeperableCostFunctionFiniteMFG}, we have:
\begin{equation*}
\begin{split}
F (P,\eta) - F (P,\eta') =\sum_{k=0}^{N-1} &\sum_{\pos\in \mystates} M_{P}^{M_0} (\pos,k) \left[f( \pos , M_\e (k )) - f( \pos , M_{\e'} (k ) )\right]
\\
+
&\sum_{\pos\in \mystates} M_{P}^{M_0} (\pos, N) \left[g( \pos , M_\e (N)) - g( \pos , M_{\e'} (N) )\right].
\end{split}
\end{equation*}
Thus, \small
\begin{equation*}
\begin{split}
\int_{\mathcal{K}_{\mystates,\time}} \left( F (P,\eta) - F (P,\eta') \right) \; \d (\eta - \eta')(P)
= 
\sum_{k=0}^{N-1} &\sum_{\pos\in \mystates} \left[f( \pos , M_\e (k)) - f( \pos , M_{\e'} (k) )\right]
\int_{\mathcal{K}_{\mystates,\time}} M_{P}^{M_0}(\pos,k) \; \d (\eta - \eta')(P) \\
+
&\sum_{\pos\in \mystates}  \left[g( \pos , M_\e (N)) - g( \pos , M_{\e'} (N) )\right]
\int_{\mathcal{K}_{\mystates,\time}} M_{P}^{M_0}(\pos,N) \; \d (\eta - \eta')(P) \\
= 
\sum_{k=0}^{N-1} &\sum_{\pos\in \mystates} \left[f( \pos , M_{\e}(k)) - f( \pos , M_{\e'}(k) )\right]
(M_{\e}(\pos , k) - M_{\e'}(\pos , k) ) \\
+
&\sum_{\pos\in \mystates} \left[g( \pos , M_{\e}(N)) - g( \pos , M_{\e'}(N) )\right]
(M_{\e}(\pos , N ) - M_{\e'}(\pos, N) ) \geq 0,
\end{split}
\end{equation*}\normalsize
where the inequality above follows from from the monotonicity of $f$ and $g$.
\end{proof}
By Remark \ref{GenUniquenessofEquilibrium} we directly deduce the following result.
\begin{proposition}
If {\bf(H1)} and {\bf(H2){\rm(i)}}  hold, then the finite MFG \eqref{finite_MFG} has a unique equilibrium.
\end{proposition}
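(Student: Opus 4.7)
The plan is to chain together the ingredients already assembled in this subsection and transfer the uniqueness result from the abstract setting (Remark \ref{GenUniquenessofEquilibrium}) back to \eqref{finite_MFG} via the equivalence established after \eqref{GenFormofCostFunction}. Existence of at least one equilibrium is not at stake here, since it is already guaranteed by Remark \ref{consequences_assumption_section_2}{\rm(i)} under {\bf(H1)}; the content of the statement is thus the uniqueness part.

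Concretely, I would proceed as follows. First, I would recall that, as noted in the paragraph preceding {\bf(H2)}, assumption {\bf(H1)} implies that the map $F:\mathcal{K}_{\mystates,\time}\times\mes(\mathcal{K}_{\mystates,\time})\to\R$ defined in \eqref{GenFormofCostFunction} is continuous and satisfies the unique minimizer condition of Definition \ref{GenUniqueminimizer}: indeed, {\bf(H1)}{\rm(ii)} combined with the DPP gives, for every fixed $\e$, a unique minimizer of $J_{M_\e}(\cdot)$ over $\mathcal{K}_{\mystates,\time}$ (Remark \ref{consequences_assumption_section_2}{\rm(ii)}). Second, {\bf(H2)}{\rm(i)} together with Lemma \ref{monotonicity_finite_mfg} gives that $F$ is monotone in the sense of Definition \ref{GenMonotonicity}.

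With these two properties in hand, I would apply Remark \ref{GenUniquenessofEquilibrium}{\rm(ii)} directly: the combination of monotonicity and the unique minimizer condition forces any equilibrium $\e\in\mes(\mathcal{K}_{\mystates,\time})$ of \eqref{GenNE} to be unique. It then remains to translate this back to \eqref{finite_MFG}. For this I would invoke the correspondence already laid out after \eqref{GenFormofCostFunction}: every equilibrium $\e$ must be a Dirac mass $\e=\delta_{P_\e}$ (by Remark \ref{GenUniquenessofEquilibrium}{\rm(i)}) with $P_\e$ solving \eqref{finite_MFG}, and conversely any solution $P$ of \eqref{finite_MFG} yields an equilibrium $\delta_P$ of $F$. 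Hence uniqueness of the equilibrium of $F$ is equivalent to uniqueness of the solution of \eqref{finite_MFG}, which concludes the proof.

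Since every ingredient has already been established in the preceding pages, I do not expect any real obstacle: the argument is a short bookkeeping exercise that cites Remark \ref{consequences_assumption_section_2}{\rm(ii)}, Lemma \ref{monotonicity_finite_mfg}, and Remark \ref{GenUniquenessofEquilibrium}. The only point where one must be mildly careful is making the equivalence between equilibria of $F$ and solutions of \eqref{finite_MFG} fully explicit, in order to make sure that uniqueness in the sense of $\mes(\mathcal{K}_{\mystates,\time})$ transfers to uniqueness of the transition kernel in $\mathcal{K}_{\mystates,\time}$; but this is immediate from the fact that any equilibrium is concentrated at a single point.
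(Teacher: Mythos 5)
Your proposal is correct and follows exactly the paper's route: the paper likewise deduces the result directly from Remark \ref{GenUniquenessofEquilibrium}, using Lemma \ref{monotonicity_finite_mfg} for monotonicity of $F$ under {\bf(H2)}{\rm(i)}, the unique minimizer condition coming from {\bf(H1)} via Remark \ref{consequences_assumption_section_2}{\rm(ii)}, and the equivalence between equilibria of $F$ and solutions of \eqref{finite_MFG}. You merely spell out the bookkeeping that the paper leaves implicit.
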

\begin{remark} The previous result slightly improves {\rm\cite[Theorem 6]{GomeMohrSouza}}, where the uniqueness of the equilibrium is proved under a stronger strict monotonicity assumption on $f$ and $g$. 
\end{remark}
In order to check assumption {\rm(ii)} in Theorem \ref{Generalisedfictitiousplay}, we need first a preliminary result.

\begin{lemma}\label{LipschitzConditionforInducedMeasures}
There exists a constant $C>0$ such that
\begin{equation}\label{lipschitz_1_MP}| M_{P}^{M_0}(k) - M_{P'}^{M_0}(k) | \leq  C | P - P' |_{\infty} \hspace{0.3cm} \forall \; P, \; P' \in \mathcal{K}_{\mystates , \time}, \; \; k=0,\hdots, N.\end{equation}
In particular,
\begin{equation}\label{lipschitz_2_MP}| M_{\e} (k)  - M_{\e'} (k) | \leq C  d_1 (\e , \e') \hspace{0.3cm}  \forall \; \e, \; \e' \in \mes(\mathcal{K}_{\mystates , \time}),  \; \; k=0,\hdots, N.\end{equation}
\end{lemma}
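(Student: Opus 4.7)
The plan is to establish \eqref{lipschitz_1_MP} by induction on $k$ using the recursive definition \eqref{definition_M_P_recursive}, and then to deduce \eqref{lipschitz_2_MP} from \eqref{lipschitz_1_MP} by the Kantorovich--Rubinstein duality formula \eqref{distance_1_difference_lipschitz}. I will use the $\ell^{1}$ norm on $\R^{|\mystates|}$, i.e.\ $|m|:=\sum_{x\in\mystates}|m(x)|$; the choice of norm is immaterial by equivalence of norms in finite dimension.

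For the first estimate, the base case $k=0$ is trivial since $M_{P}^{M_0}(\cdot,0)=M_{P'}^{M_0}(\cdot,0)=M_0$. For the inductive step, I would apply the standard telescoping trick: using \eqref{definition_M_P_recursive},
$$
M_{P}^{M_0}(x_k,k)-M_{P'}^{M_0}(x_k,k)=\sum_{x_{k-1}}M_{P}^{M_0}(x_{k-1},k-1)\bigl[P-P'\bigr](x_{k-1},x_k,k-1)+\sum_{x_{k-1}}\bigl[M_{P}^{M_0}-M_{P'}^{M_0}\bigr](x_{k-1},k-1)\,P'(x_{k-1},x_k,k-1).
$$
Taking absolute values, summing over $x_k\in\mystates$, and using that $M_{P}^{M_0}(\cdot,k-1)\in\mes(\mystates)$ and that $P'(x_{k-1},\cdot,k-1)$ also has mass $1$, the first sum is controlled by $|\mystates|\,|P-P'|_\infty$, while the second collapses to $|M_{P}^{M_0}(k-1)-M_{P'}^{M_0}(k-1)|$. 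A discrete Gronwall-type iteration then yields $|M_{P}^{M_0}(k)-M_{P'}^{M_0}(k)|\le k|\mystates|\,|P-P'|_\infty\le N|\mystates|\,|P-P'|_\infty$, which is \eqref{lipschitz_1_MP} with $C=N|\mystates|$.

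For \eqref{lipschitz_2_MP}, the definition \eqref{GenFormofCostFunction} of $M_\eta(k)$ together with linearity in $\eta$ gives
$$
M_{\eta}(x_k,k)-M_{\eta'}(x_k,k)=\int_{\mathcal{K}_{\mystates,\time}} M_{P}^{M_0}(x_k,k)\,\d(\eta-\eta')(P), \qquad x_k\in\mystates.
$$
By \eqref{lipschitz_1_MP}, for each fixed $(x_k,k)$ the integrand $P\mapsto M_{P}^{M_0}(x_k,k)$ is Lipschitz in $P$ with constant $C=N|\mystates|$ (recall $\mathcal{K}_{\mystates,\time}$ is viewed as a compact subset of $\R^{|\mystates|\times|\mystates|\times N}$). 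The Kantorovich--Rubinstein formula \eqref{distance_1_difference_lipschitz}, applied after dividing by the Lipschitz constant, gives $|M_{\eta}(x_k,k)-M_{\eta'}(x_k,k)|\le C\,d_1(\eta,\eta')$, and summing over $x_k\in\mystates$ yields \eqref{lipschitz_2_MP} with a constant $C'=N|\mystates|^2$ (possibly renaming the constant $C$).

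No serious obstacle is expected: the proof is essentially a discrete Gronwall argument plus the dual representation of $d_1$. The only point deserving care is keeping track of the two summations when writing out the telescoping identity, so that the factor $P'(x_{k-1},\cdot,k-1)$ lands exactly where it is needed for the bound on the second term to close the induction.
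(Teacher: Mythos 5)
Your proof is correct and follows essentially the same route as the paper: the same telescoping decomposition of $M_{P}^{M_0}(\cdot,k)-M_{P'}^{M_0}(\cdot,k)$ drives an induction on $k$ (you work with the $\ell^1$ norm and obtain a linear-in-$N$ constant, while the paper works with the sup norm and iterates a slightly cruder, but still finite, recursion). The only cosmetic difference is in deducing \eqref{lipschitz_2_MP}: you invoke the Kantorovich--Rubinstein duality \eqref{distance_1_difference_lipschitz}, whereas the paper integrates the pointwise bound \eqref{lipschitz_1_MP} against an arbitrary coupling $\gamma\in\Pi(\eta,\eta')$ and takes the infimum; both are one-line consequences of \eqref{lipschitz_1_MP}.
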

\begin{proof}
For any $k=0, \hdots, N-1$ and $x\in \mystates$ we have
\begin{equation}\label{3}
\begin{split}
M_{P}^{M_0}(\pos, k+1) - M_{P'}^{M_0}(\pos,k+1)
&= \sum_{\posn \in \mystates} M_{P}^{M_0} (\posn,k) P(\posn, \pos, k) - \sum_{\posn \in \mystates} M_{P'}^{M_0} (\posn,k) P'(\posn, \pos, k) \\
&\leq \sum_{\posn \in \mystates} M_{P}^{M_0} (\posn,k) (P(\posn, \pos, k) - P'(\posn, \pos, k) )\\
& \hspace{0.4cm} +   |M_{P}^{M_0}(k) - M_{P'}^{M_0}(k)|_{\infty}\sum_{\posn \in \mystates}  P'(\posn, \pos, t_k) \\
&\leq     | P - P' |_{\infty}+ |\mystates||M_{P}^{M_0}(k) - M_{P'}^{M_0}(k)|_{\infty},
\end{split}
\end{equation}
where we have used that $\sum_{\posn \in \mystates } M_{P}^{M_0} (\posn,k)=1$.  Using that  $M_{P}^{M_0}(0) =M_{P'}^{M_0}(0)=M_0$, inequality \eqref{lipschitz_1_MP} follows by applying \eqref{3} recursively. Now, given  $\gamma \in \Pi(\e, \e')$, i.e. $\gamma \in \mes(\mathcal{K}_{\mystates , \time} \times \mathcal{K}_{\mystates , \time})$ with marginals given by $\e$ and $\e'$, we have
$$\begin{array}{rcl}
| M_{\e}(k)  - M_{\e'}(k) | &=& \left| \int_{\mathcal{K}_{\mystates , \time}} M_{P}^{M_0}(k) \; \d \e(P)  - \int_{\mathcal{K}_{\mystates , \time}} M_{P'}^{M_0}(k) \; \d \e' (P') \right|  \\[6pt] 
\; & =& \left| \int_{ \mathcal{K}_{\mystates , \time}\times \mathcal{K}_{\mystates , \time}}(M_{P}^{M_0}(k) - M_{P'}^{M_0}(k)) \; \d \gamma (P,P') \right|\\[6pt] 
&\leq& C \int_{\mathcal{K}_{\mystates , \time}\times \mathcal{K}_{\mystates , \time}} | P - P' |_{\infty} \; \d \gamma (P,P').
\end{array}$$
Inequality \eqref{lipschitz_2_MP} follows by taking the infimum over $\gamma \in \Pi(\e, \e')$.
\end{proof}
\begin{lemma}\label{stability_different_points_and_measures_finite_mfg}
Assume that {\bf(H2)}{\rm(ii)} holds. Then, there exists $C>0$ such that
\begin{equation}\label{LPCONDFPFMFG}
\begin{split}
\abs F(P,\e) - F(P,\e') - F(P',\e) + F(P',\e') \abs &\leq C \;    |P - P'|_{\infty} \d_1 (\e , \e'), \\
\abs F(P,\e) - F(P,\e') \abs &\leq C \d_1 (\e , \e'),
\end{split}
\end{equation}
for all $P$, $P' \in \mathcal{K}_{\mystates , \time}$ and $\e$, $\e' \in \mes(\mathcal{K}_{\mystates , \time})$.
\end{lemma}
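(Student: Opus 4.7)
My plan is to exploit the \emph{separable} structure of $c_{xy}$ from \eqref{SeperableCostFunctionFiniteMFG} to split $F(P,\eta)$ into a part that depends only on $P$ and a part that depends on $P$ and $\eta$ only through the Lipschitz functions $f,g$ evaluated at $M_\eta$. Concretely, using that $\sum_{y}P(x,y,k)=1$, I would rewrite
\begin{equation*}
F(P,\eta)=A(P)+B(P,\eta),
\end{equation*}
where
\begin{equation*}
A(P):=\sum_{k=0}^{N-1}\sum_{x,y\in\mystates} M_{P}^{M_0}(x,k)\,P(x,y,k)\,K(x,y,P(x,k))
\end{equation*}
is independent of $\eta$, and
\begin{equation*}
B(P,\eta):=\sum_{k=0}^{N-1}\sum_{x\in\mystates} M_{P}^{M_0}(x,k)\,f(x,M_\eta(k))+\sum_{x\in\mystates}M_{P}^{M_0}(x,N)\,g(x,M_\eta(N)).
\end{equation*}

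For the first inequality, the $A$-terms cancel in the double difference, leaving
\begin{equation*}
F(P,\eta)-F(P,\eta')-F(P',\eta)+F(P',\eta')=\sum_{k=0}^{N}\sum_{x\in\mystates}\bigl[M_{P}^{M_0}(x,k)-M_{P'}^{M_0}(x,k)\bigr]\,\bigl[h_k(x,M_\eta(k))-h_k(x,M_{\eta'}(k))\bigr],
\end{equation*}
with $h_k=f$ for $k<N$ and $h_N=g$. I would then bound each factor separately: the first factor by $C|P-P'|_\infty$ via \eqref{lipschitz_1_MP} of Lemma \ref{LipschitzConditionforInducedMeasures}; the second factor by the Lipschitz property in {\bf(H2)}{\rm(ii)} composed with \eqref{lipschitz_2_MP}, giving a bound proportional to $d_1(\eta,\eta')$. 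Summing over the finite set $\mystates$ and over $k=0,\dots,N$ yields the desired constant.

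For the second inequality, with $P=P'$ fixed the only $\eta$-dependence is through $f(\cdot,M_\eta(k))$ and $g(\cdot,M_\eta(N))$, so
\begin{equation*}
F(P,\eta)-F(P,\eta')=\sum_{k=0}^{N}\sum_{x\in\mystates}M_{P}^{M_0}(x,k)\,\bigl[h_k(x,M_\eta(k))-h_k(x,M_{\eta'}(k))\bigr],
\end{equation*}
and since $\sum_{x} M_{P}^{M_0}(x,k)=1$, applying the Lipschitz bound on $f,g$ together with \eqref{lipschitz_2_MP} again yields a bound of the form $C\,d_1(\eta,\eta')$.

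There is no real obstacle here; the entire proof is a careful bookkeeping that relies on two things: (a) the cost is separable, so the $K$-dependent part drops out in the cross-difference, and (b) both of the relevant Lipschitz estimates ($|M_P^{M_0}-M_{P'}^{M_0}|$ in $|P-P'|_\infty$ and $|M_\eta-M_{\eta'}|$ in $d_1(\eta,\eta')$) are already available from Lemma \ref{LipschitzConditionforInducedMeasures}. The mildest point to keep an eye on is making sure the cancellation of $A(P)$ really does occur---this is where the separable form \eqref{SeperableCostFunctionFiniteMFG} is essential, since otherwise the term $K(x,y,P(x,k))$ could be coupled with $M$ and would not drop out of the double difference.
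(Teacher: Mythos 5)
Your proposal is correct and follows essentially the same route as the paper: both exploit the separable form of the cost so that the $K$-dependent part drops out of the differences, and then combine the Lipschitz assumption \textbf{(H2)}(ii) with the two estimates \eqref{lipschitz_1_MP} and \eqref{lipschitz_2_MP} of Lemma \ref{LipschitzConditionforInducedMeasures} to bound each factor. The explicit decomposition $F=A(P)+B(P,\eta)$ is only implicit in the paper's proof, but the bookkeeping is identical.
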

\begin{proof} Let us first prove the second relation in \eqref{LPCONDFPFMFG}. By {\bf(H2)}{\rm(ii)} and Lemma \ref{LipschitzConditionforInducedMeasures} we can write  $\abs F(P,\eta) - F (P,\eta') \abs \leq A+B$ with
$$
A:=\sum_{k=0}^{N-1} \sum_{\pos\in \mystates} M_{P}^{M_0} (\pos,k)   \abs f( \pos , M_\e (k)) - f( \pos , M_{\e'} (k)) \abs \leq c\sum_{k=0}^{N-1} \sum_{\pos\in \mystates} M_{P}^{M_0} (\pos,k) \; \d_1 (\e , \e') = cN\d_1 (\e , \e'),$$
and
$$
B:= \sum_{\pos\in \mystates} M_{P}^{M_0} (\pos,N)   \abs g( \pos , M_\e (N)) - g( \pos , M_{\e'} (N)) \abs \leq c \sum_{\pos\in \mystates} M_{P}^{M_0} (\pos,N) \;  \d_1 (\e , \e') = c \d_1 (\e , \e'),
$$
for some $c>0$. Thus, the second estimate in \eqref{LPCONDFPFMFG} follows. In order to prove the first relation in \eqref{LPCONDFPFMFG}, let us write $| F(P,\e) - F(P,\e') - F(P',\e) + F(P',\e') | \leq A'+ B'$ with
$$A':= \sum_{k=0}^{N-1} \sum_{\pos\in \mystates} \abs M_{P} (\pos,k)  - M_{P'} (\pos, k) \abs \; \abs f( \pos , M_\e (k))) - f( \pos , M_{\e'} (k)) \abs \leq CN \abs \mystates \abs | P - P'|_{\infty}\d_1 (\e , \e'),$$
$$B':= \sum_{\pos\in \mystates} \abs M_{P} (\pos,N)  - M_{P'} (\pos,N) \abs \; \abs g( \pos , M_\e (N))) - g( \pos , M_{\e'} (N)) \abs \leq C \abs \mystates \abs | P - P' |_{\infty} \d_1 (\e , \e').$$
The result follows. 
\end{proof}
By combining Lemma \ref{monotonicity_finite_mfg}, Lemma \ref{stability_different_points_and_measures_finite_mfg} and Theorem \ref{Generalisedfictitiousplay}, we get the following convergence result. 
\begin{theorem}\label{finiteMFGFictitiousPlayTheorem}
Assume {\bf(H1)} and {\bf(H2)} and let $(P_n, M_{n}, \bar{M}_n)$ be the sequence generated in  the fictitious play procedure \eqref{fictitous_play_recurrence}. Then, $(P_n, M_{n}, \bar{M}_n)\to (\hat{P}, M^{M_0}_{\hat{P}},  M^{M_0}_{\hat{P}})$, where $\hat{P}$ is the unique solution to \eqref{finite_MFG}.
\end{theorem}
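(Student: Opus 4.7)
The plan is to realize the recursion \eqref{fictitous_play_recurrence} as an instance of the abstract scheme \eqref{GenFP} with $\convexset = \mathcal{K}_{\mystates,\time}$ (compact by Definition \ref{definition_transition_kernel}) and $F(P,\eta) := J_{M_\eta}(P)$ from \eqref{GenFormofCostFunction}, and then invoke Theorem \ref{Generalisedfictitiousplay}. The translation exploits the linearity of $\eta \mapsto M_\eta$: setting $\eta_n := \frac{1}{n}\sum_{k=1}^n \delta_{P_k}$ gives $M_{\eta_n} = \frac{1}{n}\sum_{i=1}^n M^{M_0}_{P_i}$, and a direct computation shows that $M_{\eta_n}$ and $\bar{M}_{n+1}$ differ only by the Cesàro contribution of the arbitrary initial datum $M_1$, which is of order $1/n$. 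Under this identification the argmin step and the convex-combination step of \eqref{GenFP} coincide (asymptotically) with those of \eqref{fictitous_play_recurrence}.

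Next I would verify the hypotheses of Theorem \ref{Generalisedfictitiousplay} for this choice of $F$. The unique minimizer condition is immediate from {\bf(H1)}(ii) via the DPP (Remark \ref{consequences_assumption_section_2}(ii)); the monotonicity of $F$ is exactly Lemma \ref{monotonicity_finite_mfg}, which uses {\bf(H2)}(i); the Lipschitz estimate \eqref{change_of_points_in_F_estimate} and the Lipschitz continuity of $F$ in its second argument are the content of Lemma \ref{stability_different_points_and_measures_finite_mfg}, which uses {\bf(H2)}(ii); and continuity of $F$ in $P$ for fixed $\eta$ follows from {\bf(H1)}(i) combined with Lemma \ref{LipschitzConditionforInducedMeasures}.

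Applying Theorem \ref{Generalisedfictitiousplay} would then produce a unique equilibrium $\delta_{\tilde P} \in \mes(\mathcal{K}_{\mystates,\time})$ of $F$, with $P_n \to \tilde P$ and $\d_1(\eta_n, \delta_{\tilde P}) \to 0$. By the correspondence between equilibria of $F$ and solutions of \eqref{finite_MFG} recorded just after \eqref{GenFormofCostFunction}, $\tilde P$ must coincide with the unique solution $\hat P$ of \eqref{finite_MFG}. Convergence of the marginals then follows from Lemma \ref{LipschitzConditionforInducedMeasures}: the estimate \eqref{lipschitz_1_MP} together with $P_n \to \hat P$ yields $M_{n+1} = M^{M_0}_{P_n} \to M^{M_0}_{\hat P}$, the estimate \eqref{lipschitz_2_MP} together with $\d_1(\eta_n, \delta_{\hat P}) \to 0$ yields $M_{\eta_n} \to M^{M_0}_{\hat P}$, and the $O(1/n)$ discrepancy noted above transfers the latter convergence to $\bar{M}_n \to M^{M_0}_{\hat P}$.

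The only mildly delicate point I would watch for is precisely the bookkeeping between the two indexing conventions and the treatment of the arbitrary initial datum $M_1$, which has no exact counterpart in the abstract scheme. This is not a genuine obstacle: the proof of Theorem \ref{Generalisedfictitiousplay} relies solely on the fact that two consecutive averaged measures differ in $\d_1$ by $O(1/n)$ (used in the estimates of $A$ and $B$), so an additional fixed $O(1/n)$ perturbation coming from $M_1$ is absorbed without affecting the argument.
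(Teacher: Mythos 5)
Your proposal is correct and follows essentially the same route as the paper, which likewise obtains the theorem by combining Lemma~\ref{monotonicity_finite_mfg} (monotonicity), Lemma~\ref{stability_different_points_and_measures_finite_mfg} (the Lipschitz estimates) and the abstract Theorem~\ref{Generalisedfictitiousplay}, after identifying \eqref{fictitous_play_recurrence} with \eqref{GenFP} via $F(P,\eta)=J_{M_\eta}(P)$. The one point where you are more careful than the paper is the arbitrary initial belief $M_1$, and this can in fact be dispatched exactly rather than by an $O(1/n)$ perturbation: the time-zero marginal contributes only the additive constant $\sum_{x}M_0(x)f(x,\bar M_n(0))$ to $J_{\bar M_n}$ under the separable cost \eqref{SeperableCostFunctionFiniteMFG}, so one may assume $M_1(\cdot,0)=M_0$, and any such $M_1$ equals $M^{M_0}_{Q}$ for the kernel $Q(x,y,k):=M_1(y,k+1)$, whence the two schemes coincide verbatim upon taking $x_1=Q$.
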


\section{First order MFG as limits of finite MFG}\label{FirstorderMFGaslimitsoffiniteMFG}
In this section we consider a  relaxed first order MFG problem  in continuous time and with a continuum of states.   We define a natural finite MFG associated to a discretization of the space and time variables. We address our second main question in this  work, which is the convergence of the solutions of  finite  MFGs to  solutions of  continuous MFGs when the discretization parameters tend to zero. 

In order to introduce the MFG problem, we need first to introduce some definitions. Let us define $\Gamma:= C ([0,T] ; \R^d)$ and given $m_0 \in \mes(\R^d)$, called the initial distribution, let 
$$\mes_{m_0}(\Gamma) = \left\{ \e \in \mes(\Gamma) \;  ; \; e_0 \sharp \e = m_0 \right\},$$
where, for each $t\in [0,T]$, the function $e_t: \Gamma \to \R^d$ is defined by $e_t(\gamma)=\gamma(t)$.  Let $\ell:  \R^{d} \to \R$  and $f$, $g: \R^d\times \mes_1(\R^d) \to \R$. Given $m \in C([0,T];\mes_1(\R^d))$ and $q \in (1,+\infty)$, we consider the following family of variational problems, parametrized by the initial condition, 
\begin{equation}\label{family_of_optimal_control_problems}
\inf \left\{ \int_{0}^{T} \left[ \ell(\dot{\gamma}(t))+ f(\gamma(t) ,m(t)) \right] \d t + g(\gamma(T) ,m(T)) \; \; \big| \; \; \gamma \in W^{1,q}([0,T]; \R^d), \; \; \gamma(0)= x \right\}, \hspace{0.3cm} x\in \R^d.
\end{equation}
\begin{definition}\label{definition_mfg_equilibrium} We call  $\xi^* \in \mes_{m_0}(\Gamma)$ a {\it {\rm MFG} equilibrium for \eqref{family_of_optimal_control_problems}}   if $[0,T] \ni t \mapsto  e_t \sharp \xi^*$ belongs to $C([0,T]; \mes_1(\R^d))$ and $\xi^*$-almost every $\y$ solves the optimal control problem in \eqref{family_of_optimal_control_problems} with $x=\gamma(0)$ and $m(t)=e_t \sharp \xi^*$ for all $t\in [0,T]$. 
\end{definition}

Assuming that the cost functional of the optimal control problem in \eqref{family_of_optimal_control_problems} is meaningful, which is ensured by the conditions on  $\ell$,  $f$ and $g$ in assumption {\bf(H3)} below, the interpretation of a MFG equilibrium  is as follows: the measure $\xi^\ast$ is an equilibrium if it only charges trajectories in $\R^d$, distributed as $m_0$ at the initial time,   minimizing a cost depending on the collection of time marginals  of $\xi^\ast$ in $[0,T]$.

\begin{remark} Usually, see e.g. {\rm\cite{LL07mf}} and {\rm\cite{CDnotesonMFG}}, a first order MFG equilibrium is presented in the form of a system of PDEs consisting in a HJB equation, modelling the fact that a typical agent solves an optimal control problem, which depends on the marginal distributions of the agents at each time $t\in [0,T]$, coupled with a continuity equation, describing the evolution of the aforementioned marginal distributions if the agents follow the optimal dynamics. The definition of equilibrium that we adopted in this work  corresponds  to a relaxation of the PDE notion of equilibrium, and has been used, for instance, in {\rm\cite{CDHD2015}}, {\rm\cite[Section 3]{MR3644590}} and, recently, in {\rm\cite{Cannarsa_Capuani_17}}.
\end{remark}
Throughout this section, we will suppose that the following assumption holds. \medskip\\
{\bf(H3)}{\rm(i)}  The function $\ell$ is continuous and there exist constants $\underline{\ell}>0$, $\overline{\ell}>0$ and $C_\ell >0$ such that 
\begin{equation}\label{polynomial_growth_ell}
 \underline{\ell}|\alpha|^{q} - C_\ell \leq \ell(\alpha) \leq \overline{\ell}|\alpha|^{q} + C_\ell \hspace{0.3cm} \forall \; \alpha \in \R^d.
\end{equation}
{\rm(ii)} For $h=f$, $g$ we have that $h$ is continuous, $h(\cdot, m)$ is $\C^1$, for every $m\in \mes_1(\R^d)$, and there exists $C>0$ such that
\begin{equation}\label{bounds_on_f_and_g} \sup_{m \in \mes_1 (\R^d) } \left\{   \norm h(\cdot , m)\norm_{\infty} + \norm D_x h(\cdot , m) \norm_{\infty} \right\} \leq C.
\end{equation} 
{\rm(iii)} The initial distribution $m_0\in \mes(\R^d)$  has a compact support. \smallskip\\


Now we will focus on a particular class of finite MFGs and relate their solutions, asymptotically,  with the MFG equilibria for \eqref{family_of_optimal_control_problems}. Let $\left(N_{n}^{s}\right)$ and $\left(N_{n}^{t}\right)$ be two sequences of natural numbers such that $\lim_{n\to \infty} N_{n}^{s}=\lim_{n\to \infty} N_{n}^{t}=+\infty$ and let $(\epsilon_n)$ be a sequence of positive real numbers such that $\lim_{n\to \infty} \epsilon_n=0$. Define $\Delta x_{n} := 1/N_{n}^{s}$ and $\dt_{n}:=T/N_{n}^t$. For a fixed $n\in \N$, consider  the discrete state set $\mystates_{n}$ and the discrete time set $\mathcal{T}_{n}$ defined as
\begin{equation}\label{definition_discrete_grids}
\begin{array}{rcl}
\mystates_{n} & :=& \left\{ x_{i}:=  i \Delta x_n \; \; \abs \; \; i \in \mathbb{Z}^{d}, \; \; \abs i \abs_{\infty} \leq (N_{n}^s)^2 \right\} \subseteq \R^d, \\[7pt]
\mathcal{T}_{n} &:= & \left\{ t_{k}:= k \Delta t_n \; \; \abs \; \; k=0, \hdots, N_{n}^t\right\} \subseteq [0,T].
\end{array}
\end{equation}
Let us also define the (non positive) entropy function
$\mathcal{E}_n: \mes(\mystates_n) \to  \R $ by
$$ \quad \mathcal{E}_n (p) = \sum_{\pos \in \mystates_n} p(x) \log (p(x)) \hspace{0.3cm} \forall \; p \in \mes(\mystates_n),$$
with the convention  that  $0\log 0=0$.  For every $x \in \mystates_n$ set $E_x^n:= \left\{ x' \in \R^d \; | \; |x'-x|_{\infty} \leq \Delta x_n/2\right\}$. Since we will be interested in the asymptotic as $n\to \infty$, we can assume, without loss of generality, that $m_0( \partial E_x^n)=0$ for all $x \in \mystates_n$. Similarly, by {\bf(H3)}{\rm(iii)}, we can assume that the support of $m_0$ will be contained in $\cup_{x\in \mystates_n} E_x^n$. Based on these considerations, setting
%
$$M_{n,0}(x):= m_0(E_x^n) \hspace{0.3cm} \forall \; x\in \mystates_n,$$
we have that $M_{n,0} \in  \mes (\mystates_n)$. 
We consider the finite MFG,  written in a recursive form (see \eqref{introsFiniteMFGNashE}),    
\begin{equation}\label{introsFiniteMFGNashEinConvQuestion}
\begin{array}{l}
{\rm(i)} \; \; U_n (\pos,t_k)=   \min_{p \in \mes(\mystates_n)} \left\{\sum_{\posn\in \mystates_n} p(y) \left[ \dt_{n}  \ell\left(\frac{ \posn - \pos}{\dt_{n}}\right)  + U_n ( \posn, t_{k+1}) \right]+  \epsilon_n \mathcal{E}_n (p)\right\}  \\[10pt]
  \hspace{2.3cm}+ \dt_{n} f(\pos,M_n(t_k) )  \hspace{0.5cm} \forall   \; \pos \in \mystates_n, \; \;  0 \leq k < N_n^{t}  ,\\[6pt]
{\rm(ii)} \; \;  M_n( \posn, t_{k+1}) =  \sum_{x \in \mystates_n }  \hat{P}_{n}(x,y,t_k)  M_n (x,t_k) \hspace{0.5cm} \forall \; y \in \mystates_n, \; \; 0 \leq k  < N_n^{t}   ,\\[10pt]
{\rm(iii)} \; \;  M_n(x,0)=   M_{n,0}(x), \hspace{0.5cm} U_n(\pos, T)= g(\pos , M_n(T)) \hspace{0.5cm} \forall \; \pos \in \mystates_n,
\end{array}
\end{equation}  
where for all  $x \in \mystates_{n}$,   $0 \leq k \leq  N_n^{t}-1$, $\hat{P}_n( \pos, \cdot  ,t_k ) \in \mes (\mystates_n)$ is given by
\begin{equation}\label{optimal_transition_kernel}
\hat{P}_n( \pos, \cdot  ,t_k ) = \argmin_{p \in \mes(\mystates_n)} \left\{ \sum_{  \posn\in \mystates_n} p(y) \left[ \dt_{n} \ell\left( \frac{ \posn - \pos}{\dt_{n}}\right)  + U_n (\posn , t_{k+1}) \right] + \epsilon_n \mathcal{E}_n (p)\right\},
\end{equation}
and, by notational convenience,  every $p \in  \mes(\mystates_n)$ is identified with $\sum_{x \in \mystates_n} p(x) \delta_{x} \in \mes_1(\R^d)$.
Note that system \eqref{introsFiniteMFGNashEinConvQuestion} is a particular case of \eqref{introsFiniteMFGNashE}, with
$$c_{\pos\posn} (p,M) := \dt_{n}\left[ \ell\left( \frac{ \posn - \pos}{\dt_{n}}\right) +  f (\pos , M)\right] + \epsilon_n \log(p(y)).$$
\begin{remark} The positive parameter $\epsilon_n$  and the entropy term $\mathcal{E}_n$  are introduced in \eqref{introsFiniteMFGNashEinConvQuestion} in order to ensure that $\hat{P}_n$ is well-defined, and so that assumption {\bf(H1)} for system \eqref{introsFiniteMFGNashEinConvQuestion} is satisfied in this case.  In particular, Remark \ref{consequences_assumption_section_2} ensures the existence of at least one solution $(U_n,M_n)$ of \eqref{introsFiniteMFGNashEinConvQuestion}, with associated transition kernel $\hat{P}_n$ given by \eqref{optimal_transition_kernel}. 
\end{remark}

 In order to study the asymptotic behaviour of $(U_n,M_n,\hat{P}_n)$, let us first introduce some useful notations. We set $\mathcal{K}_{n}:= \mathcal{K}_{\mystates_n, \time_n}$ (see Definition \ref{definition_transition_kernel}) and, given $x\in \mystates_n$ and $t\in \time_n$, we denote by $\Gamma_{x,t}^{\mystates_n,\time_n} \subseteq \Gamma_t$  the set of continuous functions $\gamma: [t,T] \to \R^d$ such that $\gamma(t)=x$ and, for each $1 \leq k \leq m$, with  $ t_k\in \time_n \cap (t,T]$, we have that $\gamma(t_k) \in \mystates_n$ and the restriction of $\gamma$ to the interval $[t_{k-1},t_{k}]$ is affine.  Given $P\in \mathcal{K}_n$ let us define $\xi_{P}^{x,t,n}\in \mes(\Gamma_t)$ by
\begin{equation}\label{dirac_masses_in_gamma}
\xi_{P}^{x,t,n}:= \sum_{\gamma \in \Gamma_{x,t}^{\mystates_n,\time_n}} p_{P}^{x,t,n}(\gamma) \delta_{\gamma}, \hspace{0.2cm} \mbox{where } \; \; 
p_{P}^{x,t,n}(\gamma):= \prod_{t_k \in \time_n \cap [t,T]} P(\gamma(t_k), \gamma(t_{k+1}),t_k).
\end{equation}
For a given Borel measurable function $L: \Gamma_t \to \R$ and $\xi \in \mes(\Gamma_t)$  we will denote $\E_{\xi}(L):= \int_{\Gamma_t} L(\gamma) \d \xi(\gamma)$, provided that the integral is well-defined.   Using these notations,  expression   \eqref{introsFiniteMFGNashEinConvQuestion}{\rm(i)} is equivalent to 
\begin{equation}\label{NEminimizer1} 
\begin{array}{l}
U_n(x,t_{k}) = \min_{P\in \mathcal{K}_n} \; \left\{\E_{\xi_{P}^{x,t_k,n}} \left(  \dt_{n}\sum_{k'=k}^{N_{n}^{t}-1} \left[  \ell\left(\frac{\y(t_{k'+1})- \y(t_{k'})}{\dt_n}\right)  + f(\y(t_{k'}),M_n(t_{k'})) \right] \right)\right. \\[10pt]
\hspace{3.2cm}\left.+  \E_{\xi_{P}^{x,t_k,n}} \left(g(\y(T) , M_n(T))\right)+\epsilon_n  \E_{\xi_{P}^{x,t_k,n}} \left(\sum_{k'=k}^{N_{n}^{t}-1} \log P(\gamma(t_{k'}),\gamma(t_{k'+1}),t_{k'})\right)\right\},
\end{array}
\end{equation}
for all $x\in \mystates_n$ and $k=0, \hdots, N_{n}^{t}-1$.  For latter use, note that since the support of $\xi_{P}^{x,t_k,n}$ is contained in $\Gamma_{x,t_k}^{\mystates_n,\time_n}$, for  $\xi_{P}^{x,t_k,n}$ almost every $\gamma \in \Gamma_t$ we have that $\dot{\gamma}(t)= (\gamma(t_{k'+1})-\gamma(t_{k'}))/\dt_n$ for every $k'= k, \hdots, N_{n}^{t}-1$ and $t \in (t_{k'},t_{k'+1})$, and, hence, 
\begin{equation}\label{quotient_is_derivative}
\E_{\xi_{P}^{x,t_k,n}} \left(  \dt_{n}\ell\left(\frac{\y(t_{k'+1})- \y(t_{k'})}{\dt_n}\right) \right)=\E_{\xi_{P}^{x,t_k,n}} \left(\int_{t_{k'}}^{t_{k'+1}} \ell\left(\dot{\y}(t) \right) \d t \right).
\end{equation}
Finally, let us define  $\xi_{n} \in \mes(\Gamma)$ by
\begin{equation}\label{definition_xi_n}\xi_{n} := \sum_{\pos \in \mystates} M_{n,0}(x) \; \xi_{\hat{P}_n}^{x,0,n}.\end{equation}
Notice that, by definition, $M_{n}(t)= e_{t} \sharp \xi_n$ for all $t\in \time_n$. We extend $M_{n}: \time_n \to \mes_1(\R^d)$ to  $M_{n}: [0,T] \to \mes_1(\R^d)$ via the formula
\begin{equation}\label{interpolations}
M_{n}(t):= e_t \sharp \xi_n \hspace{0.4cm} \mbox{for all } t\in [0,T]. 
\end{equation}

\subsection{Convergence analysis}
We now study the limit behaviour of the solutions $(U_n, M_n)$ in \eqref{introsFiniteMFGNashEinConvQuestion}, and of the associated sequence $(\xi_n)$,  as $n \to \infty$.  
We will need the following preliminary result.  
\begin{lemma}\label{uniform_bound_un} Suppose that $\epsilon_n = O\left(\frac{1}{  N^t_{n}       \log   (N^s_n)  } \right)$. Then,  there exists $C>0$, independent of $n$,  such that
\begin{align}
\sup_{x \in \mystates_n, \; t \in \time_n}|U_n(x,t)| \leq C, \label{uniform_estimate_un}\\[4pt]
\E_{ \xi_n} \left( \int_{0}^{T}  |\dot{\gamma}(t)|^{q} \d t \right) \leq C.\label{bounded_derivatives_in_expected_value}  
\end{align}
\end{lemma}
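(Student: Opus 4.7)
The plan is to establish \eqref{uniform_estimate_un} by bounding $U_n$ above and below separately, and then to use coercivity of $\ell$ in \eqref{NEminimizer1} together with these bounds to deduce \eqref{bounded_derivatives_in_expected_value}.

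For the upper bound on $U_n(x,t_k)$, I would test the formula \eqref{NEminimizer1} with the ``stay put'' transition kernel defined by $P(x',x',t_{k'}) = 1$ for every $x' \in \mystates_n$ and every $k' \geq k$. For this choice, every $\gamma$ in the support of $\xi_P^{x,t_k,n}$ is the constant path $\gamma\equiv x$, so $\dot\gamma \equiv 0$; moreover $P(x',\cdot,t_{k'})= \delta_{x'}$ gives $\mathcal{E}_n(P(x',\cdot,t_{k'})) = 0$. Plugging in and using the uniform bound on $f$ and $g$ from \textbf{(H3)}(ii), one obtains $U_n(x,t_k) \leq T\ell(0) + T\|f\|_\infty + \|g\|_\infty$.

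For the lower bound, the key observation is that the entropy term is the only potentially large negative contribution. Using $\ell \geq -C_\ell$, $f, g \geq -C$ from \textbf{(H3)}, together with the elementary bound $\mathcal{E}_n(p) \geq -\log|\mystates_n|$ valid for every $p \in \mes(\mystates_n)$, applied in \eqref{NEminimizer1} for any kernel $P$, yields
\[
U_n(x,t_k) \geq -C_\ell T - T\|f\|_\infty - \|g\|_\infty - N_n^t \, \epsilon_n \log|\mystates_n|.
\]
From the definition \eqref{definition_discrete_grids} we have $|\mystates_n| \leq (2(N_n^s)^2+1)^d$, hence $\log|\mystates_n| \leq C \log N_n^s$ for some $C>0$ depending only on $d$. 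The hypothesis $\epsilon_n = O\bigl(1/(N_n^t \log N_n^s)\bigr)$ is precisely tuned so that $N_n^t \epsilon_n \log|\mystates_n|$ stays bounded, giving \eqref{uniform_estimate_un}.

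For \eqref{bounded_derivatives_in_expected_value}, I would evaluate \eqref{NEminimizer1} at $t_k=0$ with the optimal kernel $\hat P_n$, use identity \eqref{quotient_is_derivative} to rewrite the kinetic part as $\E_{\xi_{\hat P_n}^{x,0,n}}\!\bigl(\int_0^T \ell(\dot\gamma)\,dt\bigr)$, and invoke the lower bound $\ell(\alpha) \geq \underline\ell |\alpha|^q - C_\ell$ to obtain
\[
\underline\ell\, \E_{\xi_{\hat P_n}^{x,0,n}}\!\left(\int_0^T |\dot\gamma(t)|^q \,dt\right) \;\leq\; U_n(x,0) + C_\ell T + T\|f\|_\infty + \|g\|_\infty + N_n^t \epsilon_n \log|\mystates_n|,
\]
where the last term again arises from discarding the nonpositive entropy contribution. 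The right-hand side is uniformly bounded by Step~2 and by the already proved \eqref{uniform_estimate_un}. Multiplying by $M_{n,0}(x)$, summing over $x\in \mystates_n$, and invoking the definition \eqref{definition_xi_n} of $\xi_n$ yields \eqref{bounded_derivatives_in_expected_value}.

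The only subtle point is the careful bookkeeping of the entropy penalty: one must observe that it can be simultaneously neglected (bounded below) when producing lower bounds on $U_n$, and that the calibration of $\epsilon_n$ against $N_n^t$ and $|\mystates_n|$ is what makes this estimate uniform as $n\to\infty$. The rest is a routine application of the coercivity of $\ell$ and boundedness of $f,g$.
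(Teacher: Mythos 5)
Your proposal is correct and follows essentially the same route as the paper: the stay-put kernel for the upper bound, the uniform-distribution lower bound $\mathcal{E}_n(p)\geq -\log|\mystates_n|$ combined with the calibration of $\epsilon_n$ for the lower bound, and coercivity of $\ell$ at $t=0$ (averaged against $M_{n,0}$ via the definition of $\xi_n$) for the gradient estimate. The only cosmetic quibble is the phrase ``discarding the nonpositive entropy contribution'' in the last step --- what you actually do (and what your displayed inequality correctly records) is bound $-\epsilon_n$ times the entropy from above by $N_n^t\epsilon_n\log|\mystates_n|$, i.e.\ you again use the entropy lower bound rather than simply dropping the term.
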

\begin{proof} Let us first prove \eqref{uniform_estimate_un}.  Since the cardinality of $\mystates_n$ is equal to $(2(N^s_n)^2 +1)^d$, we have that 

$$\left(\frac{1}{(2(N^s_n)^2 +1)^d}, \hdots, \frac{1}{(2(N^s_n)^2 +1)^d}\right)= \argmin \left\{ \sum_{x \in \mystates_n} p_{x} \log p_{x} \; ; \; p \in \mes(\mystates_n) \right\}.$$
Hence, our assumption over $\epsilon_n$ implies the existence of $\hat{C} >0$, independent of $n$,  such that  for all $x \in \R^d$, $t=t_k$ ($k=0, \hdots, N_n^t-1$), we have
\begin{equation}\label{bound_on_the_entropy_term}\left|\epsilon_n \E_{\xi_{P}^{x,t,n}} \left(  \sum_{k'=k}^{N_{n}^{t}-1} \sum_{y  \in \mystates_n } P(\gamma(t_{k'}),y,t_{k'})\log P(\gamma(t_{k'}),y,t_{k'})\right)\right|\leq \hat{C} \hspace{0.3cm} \forall \; P \in \mathcal{K}_n.
\end{equation} 
Thus, the lower bound is a direct consequence of   the lower bounds for $\ell$ in \eqref{polynomial_growth_ell} and for $f$ and $g$ in \eqref{bounds_on_f_and_g}. In order to obtain the upper bound, choose $P\in \mathcal{K}_n$ in the right hand side of \eqref{NEminimizer1} such that $P(x,x,t_{k'})=1$ for all $k'=k, \hdots, N^{t}_{n}-1$. The bounds in \eqref{polynomial_growth_ell}-\eqref{bounds_on_f_and_g} imply that
$$
U_{n}(x,t_{k}) \leq  (C+C_\ell)\left( T+ 1\right),
$$
and so \eqref{uniform_estimate_un} follows.  Finally, by the lower bound in \eqref{polynomial_growth_ell}, the definition of $\xi_n$,  expression \eqref{NEminimizer1}, estimate  \eqref{uniform_estimate_un}, with $t=0$,  and  \eqref{bounds_on_f_and_g} we have the existence of $C>0$, independent of $n$, such that  
\begin{equation}\label{bound_with_q}
\begin{array}{rcl}
\E_{ \xi_n} \left( \int_{0}^{T}  |\dot{\gamma}(t)|^{q} \d t \right)&=& \E_{\xi_n} \left(\dt_{n}  \sum_{k=0}^{N_{n}^{t}-1}  \left|\frac{ \y(t_{k+1}) - \y(t_k) }{\dt_{n}}\right|^q  \right)\\[8pt]
\; &\leq& \E_{\xi_n} \left(  \frac{\dt_{n}}{\underline{\ell}} \sum_{k=0}^{N_{n}^{t}-1}  \ell\left( \frac{ \y(t_{k+1}) - \y(t_k) }{\dt_{n}} \right) + \frac{C_{\ell}T}{\underline{\ell}} \right)\leq C.
\end{array}
\end{equation}
\end{proof} 
In the proof of the next result, and in the remainder of this article, we set $q':=q/(q-1)$.
\begin{lemma}\label{compactness_sets_gamma_c} Let $C>0$. Then the set 
$$
\Gamma_{C}:= \left\{ \gamma \in W^{1,q}([0,T]; \R^d) \; | \; |\gamma(0)| \leq C \; \; \mbox{{\rm and} } \;  \int_{0}^{T} |\dot{\gamma}(t)|^q \d t \leq C \right\},
$$
is a compact subset of $\Gamma$. 
\end{lemma}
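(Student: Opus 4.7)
The plan is to apply the Arzelà–Ascoli theorem to obtain relative compactness in $\Gamma$, and then show that $\Gamma_C$ is closed under uniform convergence. The key tool throughout is Hölder's inequality applied to $\gamma(t) = \gamma(0) + \int_0^t \dot\gamma(s)\,\d s$, exploiting that elements of $\Gamma_C$ are absolutely continuous with $\dot\gamma \in L^q$.

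First I would establish uniform boundedness: for $\gamma \in \Gamma_C$ and $t \in [0,T]$, Hölder's inequality with the conjugate exponent $q'=q/(q-1)$ gives
$$|\gamma(t)| \leq |\gamma(0)| + \int_0^t |\dot\gamma(s)|\,\d s \leq C + T^{1/q'} \left(\int_0^T |\dot\gamma(s)|^q \d s\right)^{1/q} \leq C + T^{1/q'} C^{1/q}.$$
The same computation applied on the interval $[s,t]$ yields the uniform Hölder estimate
$$|\gamma(t)-\gamma(s)| \leq |t-s|^{1/q'} C^{1/q} \quad \forall\, \gamma \in \Gamma_C,$$
which is precisely the equicontinuity condition. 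By the Arzelà–Ascoli theorem, $\Gamma_C$ is relatively compact in $\Gamma$.

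It remains to prove that $\Gamma_C$ is closed in $\Gamma$. Let $(\gamma_n) \subset \Gamma_C$ converge uniformly to some $\gamma \in \Gamma$. Then $|\gamma(0)| = \lim_n |\gamma_n(0)| \leq C$. Since $(\dot\gamma_n)$ is bounded in $L^q([0,T];\R^d)$ and $q\in(1,\infty)$ implies reflexivity, there exists a subsequence (not relabelled) and $v \in L^q([0,T];\R^d)$ such that $\dot\gamma_n \rightharpoonup v$ weakly in $L^q$. Passing to the limit in $\gamma_n(t) = \gamma_n(0) + \int_0^t \dot\gamma_n(s)\,\d s$, using pointwise convergence on the left and the weak $L^q$ convergence (tested against $\mathbf{1}_{[0,t]}$) on the right, we obtain $\gamma(t) = \gamma(0) + \int_0^t v(s)\,\d s$, so $\gamma \in W^{1,q}$ with $\dot\gamma = v$. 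Finally, weak lower semicontinuity of the $L^q$-norm gives $\int_0^T |\dot\gamma|^q \d t \leq \liminf_n \int_0^T |\dot\gamma_n|^q \d t \leq C$, so $\gamma \in \Gamma_C$.

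No step is particularly delicate here; the proof is a textbook compactness argument, and the only point that requires care is ensuring that the weak $L^q$ limit of the derivatives coincides with the distributional derivative of the uniform limit, which is handled cleanly by passing to the limit in the integral representation of $\gamma_n$.
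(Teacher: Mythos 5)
Your proof is correct and follows essentially the same route as the paper's: Hölder's inequality for uniform boundedness and equicontinuity, Arzelà--Ascoli for uniform convergence along a subsequence, weak compactness of the derivatives in $L^{q}$ together with weak lower semicontinuity of the $L^{q}$-norm, and passage to the limit in the integral representation $\gamma_n(t)=\gamma_n(0)+\int_0^t\dot\gamma_n(s)\,\d s$ to identify the weak limit with the derivative of the uniform limit. The only cosmetic difference is that you package the argument as ``relatively compact plus closed,'' while the paper argues directly that an arbitrary sequence in $\Gamma_C$ has a subsequence converging to an element of $\Gamma_C$; the content is identical.
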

\begin{proof} Let $(\gamma_{n})$ be a sequence in $\Gamma_{C}$. Then, for all $0\leq s\leq t \leq T$,   H\"older's inequality yields
\begin{equation}\label{equicontinuity_gamma_n}
|\gamma_n(t) - \gamma_n(s)|\leq  \int_{s}^{t}|\dot{\gamma}_n(t')| \d t' \leq C^{1/\powL} (t-s)^{1/\powH}.
\end{equation}
Thus, 
\begin{equation}\label{uniform_bound_values_gamma_n} |\gamma_n(t)| \leq |\gamma_n(0)| + |\gamma_n(t)-\gamma_n(0)| \leq C+ C^{1/\powL} T^{1/\powH}.
\end{equation}
As a consequence of \eqref{equicontinuity_gamma_n}-\eqref{uniform_bound_values_gamma_n} and   the Arzel\`a-Ascoli theorem we have existence of $\gamma \in \Gamma$ such that, up to some subsequence,  $\gamma_n \to \gamma$ uniformly in $[0,T]$. Moreover, since $\dot{\gamma}_n$ is bounded in $L^{q}((0,T);\R^d)$ and the function $L^{q}((0,T);\R^d)\ni z \mapsto   \int_{0}^{T} |z(t)|^q \d t\in \R$ is convex and continuous, and hence, weakly lower semicontinuous, we have the existence of $\bar{z} \in L^{q}((0,T);\R^d)$ such that, up to some subsequence, $\dot{\gamma}_n \to \bar{z}$ weakly in $L^{q}((0,T);\R^d)$ and  $ \int_{0}^{T} |\bar{z}(t)|^q \d t \leq \liminf_{n\to \infty} \int_{0}^{T} |\dot{\gamma}_n(t)|^q \d t \leq C$. By passing to the limit in the equality
$$
\gamma_n(t)= \gamma_n(0) + \int_{0}^t \dot{\gamma}_n(s) \d s \hspace{0.4cm} \forall \; t \in [0,T], 
$$
we get that 
$$
\gamma(t)= \gamma(0) + \int_{0}^t \bar{z}(s) \d s \hspace{0.4cm} \forall \; t \in [0,T], 
$$
and, hence, $\gamma \in  W^{1,q}([0,T]; \R^d)$, with $\dot{\gamma}= \bar{z}$ a.e. in $[0,T]$, $|\gamma(0)|\leq C$ and $\int_{0}^{T} |\dot{\gamma}(t)|^q \d t \leq C$. Therefore,  $\gamma \in \Gamma_C$ and, hence, the set $\Gamma_{C}$ is compact. 
\end{proof}
As a consequence of the previous results we easily obtain a compactness property for the sequence $(\xi_n)$. 
\begin{proposition}\label{compactness_of_Pn} Suppose that $\epsilon_n = O\left(\frac{1}{  N^t_{n}       \log   (N^s_n)  } \right)$.  Then, the sequence $(\xi_n)$ is a relatively compact subset of $\mathcal{P}(\Gamma)$ endowed with the topology of narrow convergence. 
\end{proposition}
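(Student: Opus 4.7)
My plan is to invoke Prokhorov's theorem to reduce the relative compactness in $\mathcal{P}(\Gamma)$ (endowed with narrow convergence) to tightness of the family $(\xi_n)$. Since Lemma \ref{compactness_sets_gamma_c} already provides us with an explicit family of compact subsets of $\Gamma$, namely $\Gamma_C = \{\gamma \in W^{1,q}([0,T];\R^d) \,|\, |\gamma(0)|\leq C,\ \int_0^T |\dot\gamma(t)|^q\,\d t \leq C\}$, the whole task reduces to controlling, uniformly in $n$, the $\xi_n$-measure of the complement of $\Gamma_C$ in terms of $C$.

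First I would handle the initial position. By assumption \textbf{(H3)}(iii) the support of $m_0$ is compact, say contained in $B(0,R)$. By construction $M_{n,0}(x)=m_0(E_x^n)$ is supported on $\mystates_n \cap (\text{supp}(m_0) + B(0,\Delta x_n/2))$, and since $e_0\sharp \xi_n = M_{n,0}$, there exists $R_0>0$ (independent of $n$ for $n$ large enough) such that $\xi_n(\{\gamma : |\gamma(0)| > R_0\}) = 0$.

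Next I would control the energy of trajectories. From Lemma \ref{uniform_bound_un}, equation \eqref{bounded_derivatives_in_expected_value}, we have a uniform bound
\[
\E_{\xi_n}\!\left( \int_0^T |\dot\gamma(t)|^q \,\d t \right) \leq C_0,
\]
with $C_0$ independent of $n$. Markov's inequality then gives, for every $M>0$,
\[
\xi_n\!\left(\left\{ \gamma \in \Gamma : \int_0^T |\dot\gamma(t)|^q\,\d t > M \right\}\right) \leq \frac{C_0}{M}.
\]
Combining with the bound on the initial position, for any $\varepsilon>0$ and taking $C_\varepsilon := \max(R_0, C_0/\varepsilon)$, we obtain
\[
\xi_n(\Gamma \setminus \Gamma_{C_\varepsilon}) \leq \xi_n\!\left(\left\{ \int_0^T |\dot\gamma|^q\,\d t > C_\varepsilon\right\}\right) \leq \varepsilon,
\]
for every $n$ large enough.

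Since $\Gamma_{C_\varepsilon}$ is compact in $\Gamma$ by Lemma \ref{compactness_sets_gamma_c}, this proves tightness, and Prokhorov's theorem yields that $(\xi_n)$ is relatively compact for narrow convergence. I do not anticipate a serious obstacle here: the compactness lemma does the heavy lifting, and the assumption $\epsilon_n = O(1/(N_n^t \log N_n^s))$ has already been used inside Lemma \ref{uniform_bound_un} to deliver the energy estimate \eqref{bounded_derivatives_in_expected_value}, which is the only nontrivial input needed. The only care point is the initial-time bound, which is immediate from \textbf{(H3)}(iii).
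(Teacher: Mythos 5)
Your proposal is correct and follows essentially the same route as the paper: Prokhorov's theorem reduces the claim to tightness, which is obtained by combining the uniform energy bound \eqref{bounded_derivatives_in_expected_value} with Markov's inequality, the compact support of $m_0$ for the initial positions, and the compactness of the sets $\Gamma_C$ from Lemma \ref{compactness_sets_gamma_c}. The paper's proof uses exactly the same decomposition and the same choice $C_\varepsilon = \max\{c_0, C/\varepsilon\}$.
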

\begin{proof}  By Prokhorov's theorem  it suffices to show that $(\xi_n)$ is tight, i.e.  we need to prove that  for every $\varepsilon>0$ there exists a compact set $K_{\varepsilon} \subseteq \Gamma$ such that $\sup_{n \in \N} \xi_{n}(\Gamma \setminus K_{\varepsilon}) \leq \varepsilon$. Given  $\varepsilon>0$, the bound \eqref{bound_with_q} and the Markov's inequality yield 
\begin{equation}\label{set_for_compactness}\xi_n \left( \left\{ \gamma \in \Gamma \; \big| \; \gamma \in W^{1,q}((0,T);\R^d) \; \; \mbox{and } \; \; \int_{0}^{T}  |\dot{\gamma}(t)|^{\powL} \d t > \frac{C}{\varepsilon}\right\} \right) \leq \varepsilon \hspace{0.3cm} \forall \; n\in \N.\end{equation}
On the other hand, by {\bf(H3)}{(iii)}, there exists $c_0 >0$ such that for $\xi_n$-almost every $\gamma \in \Gamma$ we have $|\gamma(0)| \leq c_0$. By Lemma \ref{compactness_sets_gamma_c} and \eqref{set_for_compactness},  the set $K_\varepsilon:= \Gamma_{C_\varepsilon}$ with $C_\varepsilon:= \max\{c_0, C/\varepsilon\}$, satisfies the required properties. 
%
%
\end{proof}
Now, we study the compactness of the collection of marginal laws, with respect to the time variables, in the space $C([0,T];\mes_1(\R^d))$. 
\begin{proposition}\label{equicontinuity_Mn_proposition} Suppose that $\epsilon_n = O\left(\frac{1}{  N^t_{n}       \log   (N^s_n)  } \right)$. Then, there exists $C>0$ such that  
\begin{align}
\int_{\R^d} |x|^{\powL} \d M_n(t)(x)=\mathbb{E}_{\xi_n}\left(|\gamma(t)|^\powL \right) \leq C \hspace{0.3cm} \forall \; t \in [0,T], \label{second_order_bounded_moments}\\[3pt]
 d_1  (M_n(t) , M_n (s)) \leq C \abs t-s \abs^{1/\powH} \hspace{0.5cm}\forall \; t,s \in  [0,T],\label{equicontinuity_Mn}
\end{align}
for all $n\in \N$. As a consequence, $M_n \in C([0,T];\mes_1(\R^d))$ for all $n\in \N$ and the sequence  $(M_n)$ is a relatively compact subset of   $\mathcal{C}([0,T] , \mes_1 (\R^d))$.
\end{proposition}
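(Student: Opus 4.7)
The plan is to prove the three assertions in sequence, using the key bounds from Lemma 4.1 together with standard tools (Hölder's inequality and Kantorovich-Rubinstein duality). The main observation is that, $\xi_n$-almost every $\gamma$ is piecewise affine on the grid $\time_n$, hence absolutely continuous with $\dot\gamma \in L^q((0,T);\R^d)$, so formulas based on the fundamental theorem of calculus are available inside expectations.

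For \eqref{second_order_bounded_moments}, I would write $|\gamma(t)| \leq |\gamma(0)| + \int_0^t |\dot\gamma(s)|\,\d s$, apply Hölder in the second term to get $|\gamma(t)|^q \leq 2^{q-1}\bigl(|\gamma(0)|^q + T^{q/q'} \int_0^T |\dot\gamma(s)|^q \, \d s\bigr)$, and then integrate against $\xi_n$. The first term is controlled because $e_0\sharp \xi_n = M_{n,0}$ and $\supp(m_0)$ is compact by \textbf{(H3)}(iii), and the second term is controlled by \eqref{bounded_derivatives_in_expected_value}. This gives a uniform bound independent of $n$ and of $t\in[0,T]$.

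For \eqref{equicontinuity_Mn}, I would use the Kantorovich--Rubinstein representation \eqref{distance_1_difference_lipschitz}: for any $\phi\in\mathrm{Lip}_1(\R^d)$,
\begin{equation*}
\int_{\R^d}\phi\,\d(M_n(t)-M_n(s)) = \E_{\xi_n}\bigl(\phi(\gamma(t))-\phi(\gamma(s))\bigr)\leq \E_{\xi_n}(|\gamma(t)-\gamma(s)|).
\end{equation*}
Assuming $s<t$, Hölder gives $|\gamma(t)-\gamma(s)|\leq |t-s|^{1/q'}\bigl(\int_s^t|\dot\gamma(r)|^q \,\d r\bigr)^{1/q}$, and a further application of Jensen (or Hölder) in the $\xi_n$-integral yields
\begin{equation*}
\E_{\xi_n}(|\gamma(t)-\gamma(s)|) \leq |t-s|^{1/q'}\Bigl(\E_{\xi_n}\int_0^T|\dot\gamma|^q\,\d r\Bigr)^{1/q} \leq C^{1/q}|t-s|^{1/q'},
\end{equation*}
using \eqref{bounded_derivatives_in_expected_value}. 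Taking the supremum over $\phi$ gives \eqref{equicontinuity_Mn}.

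From \eqref{equicontinuity_Mn} the map $t\mapsto M_n(t)$ is $(1/q')$-Hölder continuous into $(\mes_1(\R^d),d_1)$, hence $M_n\in C([0,T];\mes_1(\R^d))$. For relative compactness I would invoke the Arzelà--Ascoli theorem for continuous functions taking values in the Polish space $\mes_1(\R^d)$: it requires (a) equicontinuity, which is uniform in $n$ by \eqref{equicontinuity_Mn}, and (b) pointwise relative compactness, namely that $\{M_n(t)\}_n$ is relatively compact in $\mes_1(\R^d)$ for each $t$. The latter follows from \eqref{second_order_bounded_moments}, since a uniform $q$-th moment bound with $q>1$ implies both tightness (via Markov's inequality) and uniform integrability of $|x|$ against $M_n(t)$ (indeed $\int_{|x|>R}|x|\,\d M_n(t)\leq R^{1-q}\int|x|^q\,\d M_n(t)\leq CR^{1-q}\to 0$). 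These two properties together characterize relative compactness in $(\mes_1(\R^d),d_1)$. The only subtle point, and the step I would be most careful with, is verifying this characterization of compactness in $\mes_1(\R^d)$; it is standard (see e.g. Proposition 7.1.5 in \cite{AGS}) but deserves a precise citation when carried out.
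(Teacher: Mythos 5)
Your proof is correct and follows essentially the same route as the paper: the moment bound via $|\gamma(t)|\leq|\gamma(0)|+\int_0^t|\dot\gamma|$ and \eqref{bounded_derivatives_in_expected_value}, the H\"older estimate in time for equicontinuity, and Arzel\`a--Ascoli combined with the moment-based compactness criterion in $\mes_1(\R^d)$ from \cite[Proposition 7.1.5]{AGS}. The only cosmetic difference is that you bound $d_1$ directly through the Kantorovich--Rubinstein duality \eqref{distance_1_difference_lipschitz} plus Jensen, whereas the paper uses the coupling $(e_t,e_s)\sharp\xi_n$ together with $d_1\leq d_q$; both yield the same $|t-s|^{1/\powH}$ modulus.
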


\begin{proof} By definition, for all $t\in [0,T]$ we have that 
\begin{equation}\label{second_order_bounded_pn}
\mathbb{E}_{\xi_n}\left(|\gamma(t)|^\powL \right)\leq 2^{\powL - 1 } \mathbb{E}_{\xi_n}\left(|\gamma(0)|^\powL + T^{\powL/\powH} \int_{0}^{T} |\dot{\gamma}(t)|^\powL \; \d t \right) \leq C,
\end{equation}
for some constant $C>0$, independent of $n$. In the second inequality above we have used that $m_0$ has compact support and \eqref{bound_with_q}. This proves \eqref{second_order_bounded_moments}. 
In order to prove \eqref{equicontinuity_Mn},
by definition of  $d_1$, we have that $d_1(M_{n}(t), M_{n}(s)) \leq d_q(M_{n}(t), M_{n}(s))$ and, setting $\rho_n := (e_{t} , e_{s}) \sharp \xi_n \in \mes(\R^d \times \R^d)$,
$$
\d_\powL^{\powL}  (M_n(t) , M_n (s)) \leq  \int_{\R^d \times \R^d} |x-y|^q \d \rho_n(x,y) =\int_{\Gamma}
\abs \y(t) - \y(s) \abs^{\powL} \; \d \xi_n (\y) 
$$
$$
\leq  |t- s|^{\powL/\powH} \int_{\Gamma} \int_{0}^{T} |\dot{\gamma}(t)|^\powL \d t \; \d \xi_{n}(\y) =  |t - s|^{\powL/\powH} \E_{\xi_n} \left( \int_{0}^{T} |\dot{\gamma}(t)|^\powL \d t \right) \leq C |t - s|^{\powL/\powH},$$
from which \eqref{equicontinuity_Mn} follows.

Finally, relation \eqref{second_order_bounded_moments}  implies that for all $t\in [0,T]$ the set  $\{ M_{n}(t) \; ; \; n\in \N\}$ is relatively compact in $\mathcal{P}_1(\R^d)$ (see \cite[Proposition 7.1.5]{AGS}) and \eqref{equicontinuity_Mn} implies that the family $(M_n)$ is equicontinuous in $C([0,T]; \mes_1(\R^d))$. Therefore,  the last assertion in the statement of the proposition follows from the Arzel\`a-Ascoli theorem.
\end{proof}

Suppose that $\epsilon_n = O\left(1/\left(N^t_{n}\log(N^s_n)\right) \right)$ and let $\xi^\ast \in \mes(\Gamma)$ be a limit point of $(\xi_n)$ (by Proposition \ref{compactness_of_Pn} there exists at least one) and, for notational convenience, we still label by $n \in \N$ a  subsequence of $(\xi_n)$ narrowly converging to $\xi^\ast$. By Proposition \ref{equicontinuity_Mn_proposition}, we have that $(M_n)$ converges to $m(\cdot):= e_{(\cdot)} \sharp \xi^\ast$ in $C([0,T]; \mes_1(\R^d))$.  We now examine the limit behaviour of the corresponding optimal discrete costs $(U_n)$. Defining the Hamiltonian $H: \R^d \to \R$ by 
\begin{equation}\label{definition_hamiltonian}
H(z):= \sup_{z' \in \R^d}\{-z \cdot z' - \ell(z')\} \hspace{0.4cm} \forall \; z\in \R^d,\end{equation}
and assuming that $\epsilon_n = o\left(1/\left(N^t_{n}\log(N^s_n)\right) \right)$,  in Proposition \ref{convergence_of_the_value_functions} we prove that $(U_n)$ converges, in a suitable sense, to a viscosity solution of 
\begin{equation}\label{limit_hjb}
\begin{array}{rcll}
-\partial_t u + H(\nabla u) &=&f(x,m(t))   \qquad &x\in \R^d, \; \; t \in (0,T), \\[6pt]
u(x,T)&=& g(x,m(T)) \qquad &x\in \R^d.
\end{array}
\end{equation}
Classical results imply that under {\bf(H3)}{\rm(i)}-{\rm(ii)}  equation \eqref{limit_hjb} admits at most one viscosity solution (see e.g. \cite[Theorem 2.1]{MR2784834}). In \cite[Proposition 1.3 and Remark 1.1]{MR1485734} the existence of a viscosity solution $u$ is proved,  as well the following representation formula: for all $(x,t) \in \R^d \times (0,T)$
\begin{equation}\label{representation_formula_u}
u(x,t)=\inf \left\{ \int_{t}^{T} \left[ \ell(\dot{\y}(s)) + f(\y(s) ,m(s)) \right] \d s + g(\y(T) ,m(T)) \; \; \big| \; \; \y \in W^{1,q}([0,T]; \R^d), \; \; \y(t)= x \right\}.
\end{equation}
Standard arguments using   \eqref{representation_formula_u} show that $u$ is continuous in $\R^d\times [0,T]$ (see e.g. \cite[Theorem 2.1]{MR1485734}).
\begin{remark}\label{redefinition_of_mfg_equilibrium} Definition \ref{definition_mfg_equilibrium} can thus be rephrased as follows:   $\xi^* \in \mes_{m_0}(\Gamma)$ is a {\it MFG equilibrium for \eqref{family_of_optimal_control_problems}}   if $[0,T] \ni t \mapsto m(t):= e_t \sharp \xi^*$ belongs to $C([0,T]; \mes_1(\R^d))$ and for $\xi^{\ast}$-almost all $\gamma$ we have that 
\begin{equation}\label{value_function_minimum_attained}u(\gamma(0),0)= \int_{0}^{T}  \left[\ell(\dot{\y}(t))   +  f(\y(t),m (t)) \right] \; \d t + g(\y(T) , m(T)),
\end{equation}
where $u$ is the unique viscosity solution to \eqref{limit_hjb}.
\end{remark}\medskip

In order to prove the convergence of $U_n$ to $u$, we will need the following auxiliary functions  
\begin{equation}\label{definition_of_the_envelopes}
U^{\ast}(x,t):= \limsup_{\substack{n\to \infty \\ \mystates_n \ni y \to x \\ \time_n \ni s \to t }} U_n(y,s), \hspace{1cm} U_{\ast}(x,t):= \liminf_{\substack{n\to \infty \\ \mystates_n \ni y \to x \\ \time_n \ni s \to t }} U_n(y,s) \hspace{0.8cm} \forall \; x \in \R^d, \; \; t\in [0,T].
\end{equation}
By Lemma \ref{uniform_bound_un}, the functions $U^{\ast}$ and $U_{\ast}$ are well defined if  $\epsilon_n = O\left( 1/(N^t_{n}\log(N^s_n)) \right)$. 
In some of the next results, we will need to assume  a stronger hypothesis on $\epsilon_n$, namely $\epsilon_n = o\left( 1/(N^t_{n}\log(N^s_n)) \right)$, which will allow us to eliminate the entropy term in the limit.

Before proving the convergence of the value functions, we will need a preliminary result. 
\begin{lemma}\label{envelope_properties}  Assume  that $\epsilon_n = O\left(\frac{1}{N^t_{n}       \log   (N^s_n)} \right)$. Then,\\
{\rm(i)} $U^\ast$ and $U_\ast$ are upper and lower semicontinuous, respectively.\\
{\rm(ii)} If in addition, $\epsilon_n = o\left(\frac{1}{N^t_{n}       \log   (N^s_n)} \right)$, we have that $U^\ast(x,T)=U_\ast(x,T)=g(x,m(T))$ for all $x\in \R^d$. 
\end{lemma}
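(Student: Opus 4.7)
For (i), I would rely on the standard half-relaxed-limit construction: an outer $\limsup$/$\liminf$ over shrinking neighborhoods in $\R^d \times [0,T]$ is automatically upper/lower semicontinuous. Concretely, for any $(x_k, t_k) \to (x,t)$, a diagonal extraction of grid points $(y_{n(k)}, s_{n(k)}) \in \mystates_{n(k)} \times \time_{n(k)}$ with $(y_{n(k)}, s_{n(k)}) \to (x_k, t_k)$ and $U_{n(k)}(y_{n(k)}, s_{n(k)})$ close to $U^\ast(x_k, t_k)$ would yield a competitor in the $\limsup$ defining $U^\ast(x,t)$, so that $\limsup_k U^\ast(x_k, t_k) \leq U^\ast(x, t)$; the argument for $U_\ast$ is symmetric. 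Lemma \ref{uniform_bound_un} ensures both envelopes are finite.

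For (ii), I would prove matching asymptotic bounds on $U_n(y_n, s_n)$ for arbitrary sequences $y_n \in \mystates_n \to x$ and $s_n \in \time_n \to T$. First, plugging the stay-put kernel $P(y,y,t_k) \equiv 1$ into \eqref{NEminimizer1} produces constant trajectories $\gamma \equiv y_n$, kills the entropy term (since $1 \log 1 = 0$), and using \textbf{(H3)} yields $U_n(y_n, s_n) \leq (T - s_n)(\ell(0) + C) + g(y_n, M_n(T))$. Since $M_n(T) \to m(T)$ in $\mes_1(\R^d)$ by Proposition \ref{equicontinuity_Mn_proposition} and $g$ is continuous, this gives $U^\ast(x, T) \leq g(x, m(T))$. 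For the reverse inequality, let $P_n^\ast$ be an optimal kernel in \eqref{NEminimizer1}, set $\xi_n^\ast := \xi_{P_n^\ast}^{y_n, s_n, n}$ and $A_n := \E_{\xi_n^\ast}\!\int_{s_n}^T |\dot\gamma|^q \d t$. Combining the coercive bound \eqref{polynomial_growth_ell} on $\ell$, the $L^\infty$ and Lipschitz bounds \eqref{bounds_on_f_and_g}, the H\"older estimate $|\gamma(T) - y_n| \leq (T-s_n)^{1/q'} (\int_{s_n}^T |\dot\gamma|^q \d t)^{1/q}$, and the entropy bound from the proof of Lemma \ref{uniform_bound_un} (which becomes $o(1)$ under the strengthened hypothesis $\epsilon_n = o(1/(N_n^t \log N_n^s))$) I would obtain
\[
U_n(y_n, s_n) \geq \underline{\ell}\, A_n - (T - s_n)(C_\ell + C) + g(y_n, M_n(T)) - C_g (T - s_n)^{1/q'} A_n^{1/q} - o(1).
\]
Subtracting this from the upper bound cancels $g(y_n, M_n(T))$ and produces the bootstrap inequality $\underline{\ell}\, A_n \leq c_1 (T - s_n) + c_2 (T - s_n)^{1/q'} A_n^{1/q} + o(1)$.

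A Young-type case split, depending on whether $c_2 (T-s_n)^{1/q'} A_n^{1/q}$ exceeds $(\underline\ell/2) A_n$ or not, forces $A_n \to 0$ (in the first case directly; in the second via $A_n^{1/q'} \leq 2c_2 (T-s_n)^{1/q'}/\underline\ell$). Substituting back into the lower bound, all terms but $g(y_n, M_n(T))$ vanish, giving $\liminf U_n(y_n, s_n) \geq g(x, m(T))$ and hence $U_\ast(x, T) \geq g(x, m(T))$; combined with the upper bound and $U_\ast \leq U^\ast$, this closes (ii). The main obstacle is precisely this bootstrap: $A_n$ is only controlled through the value function itself, so one must absorb the sublinear $A_n^{1/q}$ deviation term against $\underline\ell A_n$ using the strict superlinearity $\underline\ell > 0$ of $\ell$, and simultaneously arrange for the entropy penalty to be genuinely negligible—explaining why the strengthened asymptotic $\epsilon_n = o(1/(N_n^t \log N_n^s))$ is required for (ii), whereas the $O$-version suffices for (i).
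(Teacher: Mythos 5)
Your proof is correct, and part (i) together with the upper bound in part (ii) (stay‑put kernel, zero entropy, $M_n(T)\to m(T)$) coincides with what the paper does. The lower bound in (ii) is where you genuinely diverge. The paper argues pathwise: for each discrete trajectory it applies the power‑mean inequality $|\gamma(T)-x^n|^q\le (N_n^t-k(n))^{q-1}\sum_k|\gamma(t_{k+1})-\gamma(t_k)|^q$ to bound the kinetic cost from below by $\underline{\ell}\,|\gamma(T)-x^n|^q/(T-t^n)^{q-1}$, then marginalizes onto the terminal position, replaces the average by $\min_{y\in\mystates_n}$, and studies the discrete minimizer $y_n^\ast$ directly, showing $|y_n^\ast-x^n|=O(T-t^n)$ by testing against $y=x^n$ and using the Lipschitz bound on $g$. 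You instead work at the level of the expected energy $A_n=\E_{\xi_n^\ast}\int_{s_n}^T|\dot\gamma|^q\,\d t$, combine the coercivity of $\ell$ with the H\"older estimate $|\gamma(T)-y_n|\le (T-s_n)^{1/q'}\bigl(\int|\dot\gamma|^q\bigr)^{1/q}$ and Jensen, and close a bootstrap inequality $\underline{\ell}A_n\le c_1(T-s_n)+c_2(T-s_n)^{1/q'}A_n^{1/q}+o(1)$ by absorption (your case split is exactly a Young-inequality absorption). Both implementations exploit the same mechanism --- coercivity of $\ell$ plus Lipschitz continuity of $g$ in $x$ forces optimal trajectories not to move as $t^n\to T$ --- and both correctly locate where the strengthened hypothesis $\epsilon_n=o\bigl(1/(N_n^t\log N_n^s)\bigr)$ is used, namely to make the (nonpositive) entropy contribution negligible in the lower bound. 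The paper's route is more explicit and exploits the finiteness of $\mystates_n$ (reduction to a finite minimization); yours never uses the discrete structure and would transfer verbatim to a continuum approximation, at the mild cost of the extra Jensen step and the bootstrap. Either argument is acceptable.
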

\begin{proof} The proof of assertion {\rm(i)} is the same than the proof of \cite[Chapter V, Lemma 1.5]{MR1484411}. Let us prove {\rm(ii)}. For $n \in \N$,  let $x^n \in \mystates_n$, $t^n \in \time_n$ and $k: \N \to \N$ such that $t^{n}=t_{k(n)}$ (recall that $\time_n= \{0, t_1,\hdots, t_{N^t_n}\}$).  
Because of our assumption on $\epsilon_n$, we can write \small
\begin{equation}\label{Un_writen_in_terms_of_sums}
\begin{array}{ll}
U_{n}(x^n,t^n) 
=& \sum_{\y \in \Gamma_{x^n,t^n}^{\mystates_n,\time_n }}  p_{\hat{P}_{n}}^{x^n,t^n}(\gamma)  \left( \sum_{k=k(n)}^{N_n^t-1} \dt_{n}  \left[   \ell\left( \frac{  \y(t_{k+1}) -  \y(t_{k}) }{\dt_{n}}\right)+  f(\y(t_k)  , M_n (t_k) ) \right] + g (\y(T),M_n (T)) \right)\\[10pt]
\; &  + o(1),
\end{array}
\end{equation} \normalsize
where we recall that $p_{\hat{P}_{n}}^{x^n,t^n}$ is defined in \eqref{dirac_masses_in_gamma}.
%
Using the definition of $U_n$ and arguing as in the proof of Lemma \ref{uniform_bound_un}, we have that
$$
\sum_{k=k(n)}^{N_n^t-1}  \dt_{n} f(\y(t_k)  , M_n (t_k) ) =  O (T-t^n),$$
$$
U_{n}(\pos^n,t^n) \leq  g(x^n , M_n (T))+O(T-t^n) +o(1).$$
Therefore, if $x^n \to x \in \R^d$ and $t^n\to T$, we have 
$$\limsup_{n\to \infty } \; U_n (x^n,t^n) \leq
g(x, m(T)), $$
from which we deduce that $U^{\ast}(x,T) \leq g(x,M(T))$ for all $x\in \R^d$. Next, for every $\y \in \Gamma_{x^n,t^n}^{\mystates_n,\time_n }$ we have
$$
\left| \gamma(T)-x_n\right|^q \leq  \left(\sum_{k=k(n)}^{N_n^t-1} |\gamma(t_{k+1})-\gamma(t_{k+1})|\right)^q \leq (N_n^t-k(n))^{q-1}\sum_{k=k(n)}^{N_n^t-1}| \gamma(t_{k+1})-\gamma(t_{k+1})|^q,
$$
which implies that 
\begin{equation}\label{bounds_from_below_un1}
 \sum_{k=k(n)}^{N_n^t-1}    \dt_{n} \left| \frac{  \y(t_{k+1}) -  \y(t_{k}) }{\dt_{n}}\right|^q  \geq  \frac{\dt_n}{  (N_n^t - k(n))^{\powL - 1}}\left|\frac{  \y(T) -  x_n }{\dt_n}\right|^q = \frac{1}{ (T- t^n)^{\powL - 1}}\left|   \y(T) -  x_n \right|^q.
\end{equation}
%
Thus, setting   $p_{T,\posn}^{\pos^n,t^n} :=  \xi_{\hat{P}_n}^{x^n,t^n}   ( \{ \gamma \in \Gamma_{t^n} \; | \; \y(T) = \posn\})$, the bounds \eqref{polynomial_growth_ell}, \eqref{bounds_on_f_and_g}, \eqref{bounds_from_below_un1} and equation \eqref{Un_writen_in_terms_of_sums} yield 
\begin{equation}\label{inequalities_to_obtain_the_liminf}
\begin{split}
U_n (x^n,t^n)
&\geq \sum_{\posn \in \mystates_n } p_{T,\posn}^{\pos^n,t^n} \left( \frac{ \underline{\ell}\left| \posn -  \pos^n \right|^\powL }{  (T - t^n)^{\powL-1}}  + g ( \posn ,M_n (T)) \right) + O(T-t^n) + o(1)  \\
&\geq \min_{\posn \in \mystates_n } \left\{\frac{ \underline{\ell}\left| \posn -  \pos^n \right|^\powL }{ (T - t^n)^{\powL-1}}  + g ( \posn ,M_n (T)) \right\} + O(T-t^n)+ o(1) .
\end{split}
\end{equation}
Suppose that $\posn^*_n$ minimizes the ``$\min$'' term in the  last line above. By definition, we have
$$ \frac{ \underline{\ell}\left| \posn_n^* -  \pos^n \right|^\powL }{   (T - t^n)^{\powL-1}}  \leq  g(x^n,M_n(T)) - g ( \posn^*_n ,M_n (T)) \leq C \left| \posn^*_n -  \pos^n \right|,$$
where the last inequality follows from \eqref{bounds_on_f_and_g}.  As a consequence, we get that $\left| \posn^*_n -  \pos^n \right| = O(T-t^n)$ and so $\frac{ \left| \posn_n^* -  \pos^n \right|^\powL }{  (T - t^n)^{\powL-1}}  \to 0$ as $n\to \infty$. Therefore, as $n\to \infty$,
$$
\min_{\posn \in \mystates_n} \left\{ \frac{ \underline{\ell} \left| \posn -  \pos^n \right|^\powL }{  (T - t^n)^{\powL-1}}  + g ( \posn ,M_n (T)) \right\}
=
\frac{ \underline{\ell}\left| \posn_n^* -  \pos^n \right|^\powL }{   (T - t^n)^{\powL-1}} + g ( \posn^*_n ,M_n (T))
\to 
g ( \pos , m  (T)).
$$
By \eqref{inequalities_to_obtain_the_liminf}, this implies that 
$$\liminf_{n \to \infty } U_n (x^n,t^n) \geq
g(x, m(T)),$$
from which we deduce that $U_{\ast}(x,T) \geq g(x,m(T))$. The result follows.
\end{proof}
Now, we prove the convergence of the sequence $(U_n)$. The argument of the proof uses some ideas from the theory of approximation of viscosity solutions (see e.g. \cite{MR1115933}).
\begin{proposition}\label{convergence_of_the_value_functions} Assume  that, as $n\to \infty$,  $N^t_n/N^s_n \to 0$  and $\epsilon_n = o\left(\frac{1}{N^t_{n} \log   (N^s_n)} \right)$. Then,  $U^\ast=U_{\ast}=u$, where $u$ is given by \eqref{representation_formula_u}, or equivalently, where $u$ is the unique continuous viscosity solution to  \eqref{limit_hjb}. As a consequence, for every compact set $Q \subseteq \R^d$ we have that 
\begin{equation}\label{uniform_convergence_on_grid_points}
\sup_{(x,t) \in \left(\mystates_n \cap Q\right) \times \time_n} |U_n(x,t)-u(x,t)| \to 0 \hspace{0.5cm} \mbox{as $ \; n\to \infty$}.
\end{equation}
\end{proposition}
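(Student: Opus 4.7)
The plan is to apply the method of half-relaxed limits (in the spirit of the Barles--Souganidis convergence of monotone schemes): show that the upper envelope $U^\ast$ is a viscosity subsolution and the lower envelope $U_\ast$ is a viscosity supersolution of \eqref{limit_hjb}, then conclude $U^\ast=U_\ast=u$ by comparison. The terminal boundary values $U^\ast(\cdot,T)=U_\ast(\cdot,T)=g(\cdot,m(T))$ have already been supplied by Lemma \ref{envelope_properties}(ii). Since $\mathcal{E}_n(p)\in[-\log|\mystates_n|,0]$ with $\log|\mystates_n|=O(\log N^s_n)$, the assumption $\epsilon_n=o(1/(N^t_n\log N^s_n))$ gives $\epsilon_n\log|\mystates_n|=o(\dt_n)$, so the scheme \eqref{introsFiniteMFGNashEinConvQuestion}(i) is squeezed between
\begin{equation*}
\min_{y\in\mystates_n}\bigl\{\dt_n\ell\bigl((y-x)/\dt_n\bigr)+U_n(y,t_{k+1})\bigr\}+\dt_n f(x,M_n(t_k))+o(\dt_n)\;\le\;U_n(x,t_k)\;\le\;\min_{y\in\mystates_n}\bigl\{\dt_n\ell\bigl((y-x)/\dt_n\bigr)+U_n(y,t_{k+1})\bigr\}+\dt_n f(x,M_n(t_k)),
\end{equation*}
the upper bound coming from Dirac test measures and the lower one from the extremal value of the entropy.

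For the subsolution, pick $\varphi\in C^\infty$ and $(x_0,t_0)\in\R^d\times[0,T)$ a strict local maximum of $U^\ast-\varphi$; by a standard argument one finds $(x_n,t_n)\in\mystates_n\times\time_n$ with $(x_n,t_n)\to(x_0,t_0)$, $U_n(x_n,t_n)\to U^\ast(x_0,t_0)$, and $U_n-\varphi$ attaining a local maximum at $(x_n,t_n)$. For any fixed $\alpha\in\R^d$, let $y_n^\alpha\in\mystates_n$ be a nearest grid point to $x_n+\dt_n\alpha$; the hypothesis $N^t_n/N^s_n\to 0$ yields $\Delta x_n/\dt_n\to 0$, hence $(y_n^\alpha-x_n)/\dt_n=\alpha+o(1)$. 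Plugging $y_n^\alpha$ into the upper bound, using the local maximum property to replace $U_n$ by $\varphi$, Taylor expanding $\varphi$, dividing by $\dt_n$ and sending $n\to\infty$ yields $0\le\ell(\alpha)+\partial_t\varphi(x_0,t_0)+\nabla\varphi(x_0,t_0)\cdot\alpha+f(x_0,m(t_0))$; taking the infimum over $\alpha\in\R^d$ and recalling the definition \eqref{definition_hamiltonian} of $H$ gives $-\partial_t\varphi+H(\nabla\varphi)\le f$ at $(x_0,t_0)$.

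The supersolution argument is analogous, using the lower bound of the scheme with minimizer $y_n^\ast\in\mystates_n$ and $\alpha_n:=(y_n^\ast-x_n)/\dt_n$. The main technical obstacle is to verify that $(\alpha_n)$ stays bounded: the raw scheme bound $\dt_n\ell(\alpha_n)=O(1)$ together with the lower bound in \eqref{polynomial_growth_ell} yields only the weak estimate $|\alpha_n|\le C\dt_n^{-1/q}$, which however already implies $|y_n^\ast-x_n|\to 0$, so that the local minimum property applies at $(y_n^\ast,t_n+\dt_n)$; combining the local minimum, Taylor expansion (divided by $\dt_n$) and the superlinearity of $\ell$ then yields an inequality of the form $\underline\ell|\alpha_n|^q\le C(1+|\alpha_n|)+O(\dt_n|\alpha_n|^2)$, in which the last error term can be absorbed using the weak estimate and $q>1$, whence $(\alpha_n)$ is bounded. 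Extracting $\alpha_n\to\bar\alpha$ and taking the supremum over all such limits gives $-\partial_t\varphi+H(\nabla\varphi)\ge f$ at $(x_0,t_0)$.

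Combined with Lemma \ref{envelope_properties}(ii) and the standard comparison principle for \eqref{limit_hjb} (\cite[Theorem 2.1]{MR2784834}), the two properties yield $U^\ast\le U_\ast$; the reverse inequality is trivial from the definitions, so $U^\ast=U_\ast=u$. The uniform convergence \eqref{uniform_convergence_on_grid_points} then follows by a standard compactness argument: were it to fail, one could extract $(x_n,t_n)\in(\mystates_n\cap Q)\times\time_n$ with $(x_n,t_n)\to(\bar x,\bar t)\in Q\times[0,T]$ and $|U_n(x_n,t_n)-u(x_n,t_n)|\ge\delta$, yet any cluster value of $U_n(x_n,t_n)$ lies in $[U_\ast(\bar x,\bar t),U^\ast(\bar x,\bar t)]=\{u(\bar x,\bar t)\}$, while continuity of $u$ gives $u(x_n,t_n)\to u(\bar x,\bar t)$, contradicting the choice of $(x_n,t_n)$.
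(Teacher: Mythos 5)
Your proof follows the same Barles--Souganidis half-relaxed-limit strategy as the paper: eliminate the entropy term via $\epsilon_n\log|\mystates_n|=o(\dt_n)$, show $U^\ast$ and $U_\ast$ are sub- and supersolutions of \eqref{limit_hjb} by testing with grid points $y$ satisfying $(y-x^n)/\dt_n\to\alpha$ (which is exactly where $N^t_n/N^s_n\to 0$ enters), match the terminal data through Lemma \ref{envelope_properties}(ii), and conclude by comparison. The only differences are organizational: for the subsolution you test fixed directions $\alpha$ and take an infimum, whereas the paper analyzes the minimizer $y_n^\ast$ of the one-step problem directly, and your two-step bootstrap for the boundedness of $\alpha_n$ in the supersolution step (raw bound $|\alpha_n|\leq C\dt_n^{-1/q}$ from the uniform bound on $U_n$, then absorption using $q>1$) correctly supplies the detail the paper leaves as ``analogous.''
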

\begin{proof}
Let us prove that $U^*$ is a viscosity subsolution of equation \eqref{limit_hjb}. Let $\phi \in \C^1( \R^d \times [0,T])$  and $(x^*,t^*)\in \R^d \times (0,T)$ be such that $(x^*,t^*)$ is a local maximum of $U^\ast - \phi$ on $\R^d \times (0,T)$.

By standard arguments in the theory of viscosity solutions (see e.g. \cite[Chapter II]{MR1484411}), we may assume that $\phi$ is bounded as well as its time and space derivatives  and that $(x^*,t^*)$ is a strict global maximum of $U^\ast - \phi$. Arguing as in the proof of \cite[Chapter V, Lemma 1.6]{MR1484411}, we can show the existence of a sequence $(x^n, t^n)$ in $ \mystates_n\times \time_n$ such that  $ (x^n, t^n)\to (x^\ast, t^\ast)$, $U_n(x^n,t^n) \to U^\ast(x^\ast, t^\ast)$ and $U_n-\phi$ has maximum at $(x^n,t^n)$ in the set $\left(\mystates_n\times \time_n\right) \cap B_\delta$, where $B_\delta:=\{ (x, t) \in \R^d \times (0,T) \; ; \; |x-x^{\ast}|+|t-t^\ast| \leq \delta \}$ and $\delta>0$ is such that $B_\delta \subseteq \R^d \times (0,T)$.
%


Now, let $\xi \in C^{\infty}(\R^d \times [0,T])$ be such that $0\leq \xi \leq 1$,  $\xi(x,t)= 0$ if $(x,t)\in B_{\frac{\delta}{2}}$ and $\xi(x,t)= 1$ if $(x,t)\in \left(\R^d \times (0,T) \right) \setminus B_{\delta}$. Then, using that $U_n$ and $\phi$ are bounded, we can choose $M>0$ large enough such that, setting $\bar{\phi}:= \phi +M \xi$, the function $U_n- \bar{\phi}$ has   maximum in $\mystates_n\times \time_n$ at the point $(x^n,t^n)$. Note that $\partial_{t}\bar{\phi}(x^\ast,t^\ast)=\partial_{t}\phi(x^\ast,t^\ast)$ and  $\nabla \bar{\phi}(x^\ast, t^\ast)= \nabla \phi(x^\ast, t^\ast)$.  

As in the proof of Lemma \ref{envelope_properties}, let $k: \N \to \N$ be such that $t^n=t_{k(n)}$.  Since $U_n(x^n,t^n)$ satisfies
%
\begin{equation*}
\begin{split}
U_{n}(\pos^n,t^n)
&= \min_{p \in \mes(\mystates_n)} \sum_{\posn\in \mystates_n} p(y) \left( \dt_{n} \ell \left(\frac{ \posn - \pos^n}{\dt_{n}}\right) + \dt_{n} f(x^n ,M_n (t^n) ) + U_{n}(\posn,t_{k(n)+1}) \right) + \epsilon_n \mathcal{E}_n(p),
\end{split}
\end{equation*}
and $U_{n}(\posn,t_{k(n)+1})-U_n(x^n,t^n) \leq \bar{\phi}(y,t_{k(n)+1})- \bar{\phi}(x^n,t^n)$ for all $y\in \mystates_n$, we have that 
\begin{equation}\label{ViscosityInequalityFiniteMFGEliminatingEntropy}
\begin{split}
0 &\leq \min_{p \in \mes(\mystates_n) } \sum_{\posn\in \mystates_n} p(y) \left( \dt_{n} \ell \left(\frac{ \posn - \pos^n}{\dt_{n}}\right) + \dt_{n} f(x^n ,M_n (t_{k(n)}) ) + \bar{\phi}(\posn,t_{k(n)+1}) - \bar{\phi}(\pos^n,t_{k(n)}) \right) + \epsilon_n \mathcal{E}_n(p) ,\\
&\leq \min_{ \posn \in \mystates_n} \left\{\dt_{n} \ell \left(\frac{ \posn - \pos^n}{\dt_{n}}\right) + \dt_{n} f(x^n , M_n (t_{k(n)}) ) + \bar{\phi}(\posn,t_{k(n)+1}) - \bar{\phi}(\pos^n,t_{k(n)}) \right\}+ \epsilon_n \mathcal{E}_n(p),
\end{split}
\end{equation}
where the second inequality follows from the first one by taking   for each $y\in \mystates_n$ the vector $p\in \mes (\mystates_n)$ defined as $p(z)=1$ iff $z=y$.     Dividing by $\Delta t_n$ and recalling that $\epsilon_n=o\left(\frac{1}{N^t_{n} \log   (N^s_n)} \right)$, we get
$$0 \leq f(x^n ,M_n (t^n) ) +  \min_{\posn \in \mystates_n} \left\{  \ell \left(\frac{ \posn - \pos^n}{\dt_{n}}\right)  + \frac{\bar{\phi}(\posn,t_{k(n)+1}) - \bar{\phi}(\pos^n,t_{k(n)})}{\dt_{n}} \right\}+o(1),
$$
and so, taking liminf, 
\begin{equation}\label{1conver}
0 \leq f(x^\ast,m (t^\ast))+   \liminf_{n\to \infty}   \min_{\posn \in \mystates_n} \left\{  \ell \left(\frac{ \posn - \pos^n}{\dt_{n}}\right) + \frac{\bar{\phi} (\posn,t_{k(n)+1}) - \bar{\phi} (x^n,t_{k(n)})}{\dt_{n}} \right\} ,
\end{equation}
where we have used that $M_n \to m$ in $C([0,T]; \mes_1(\R^d))$.
Let us study the second term in the right hand side above.  For fixed $n$, let $y_n^\ast$ be such that
$$
y_n^\ast \in  \argmin_{\posn \in \mystates_n} \left\{  \ell \left(\frac{ \posn - \pos^n}{\dt_{n}}\right) + \frac{\bar{\phi} (\posn,t_{k(n)+1}) - \bar{\phi} (x^n,t_{k(n)})}{\dt_{n}} \right\},
$$
or equivalently, setting $\alpha_n^\ast:= \frac{y_n^\ast-x^n}{\dt_n} $,
\begin{equation}\label{minimization_z_n}
\ell\left( \alpha_n^\ast\right)+ \frac{\bar{\phi} (x^n +\dt_n \alpha_n^\ast,t_{k(n)+1}) - \bar{\phi} (x^n,t_{k(n)})}{\dt_{n}} \leq    \ell \left(\frac{ \posn - \pos^n}{\dt_{n}}\right) + \frac{\bar{\phi} (\posn,t_{k(n)+1}) - \bar{\phi} (x^n,t_{k(n)})}{\dt_{n}},
\end{equation}
for all $y\in \mystates_n$. By taking $y=x^n$ in the expression above, using that $\partial_t \bar{\phi}$ and $\nabla \bar{\phi}$ are bounded and the growth condition \eqref{polynomial_growth_ell} on $\ell$, we obtain that the sequence $(\alpha_n^\ast)$ is bounded.  Let $\alpha^\ast$ be a cluster point of this sequence and consider a subsequence of $(\alpha_n)$, still indexed by $n$, such that $\alpha_n^\ast\to \alpha^\ast$.  The condition   $N^t_n/N^s_n \to 0$  implies that for any $\alpha \in \R^d$ we can find a sequence $(y^n)$ in  $\mystates_n$ such that $\frac{y^n-x^n}{\dt_n} \to \alpha$ as $n\to \infty$. Taking $y=y^n$ in \eqref{minimization_z_n} and passing to the limit  yields 
\begin{equation}\label{minimization_z_limit}
\ell( \alpha^\ast) + \nabla \phi(x^\ast,t^\ast)\cdot \alpha^\ast  \leq \ell(\alpha)   + \nabla \phi(x^\ast,t^\ast)\cdot \alpha \hspace{0.3cm} \forall \; \alpha \in \R^d, 
\end{equation}
which  implies, by the definition of $H$ in \eqref{definition_hamiltonian},  that 
$$
-\ell( \alpha^\ast)- \nabla \phi(x^\ast,t^\ast)\cdot \alpha^\ast=  H(\nabla \phi(x^\ast,t^\ast)).
$$
Since the previous equality holds for any cluster point of $\alpha_n$, we deduce that 
$$\liminf_{n\to \infty}   \min_{\posn \in \mystates_n} \left\{  \ell \left(\frac{ \posn - \pos^n}{\dt_{n}}\right) + \frac{\bar{\phi} (\posn,t_{k(n)+1}) - \bar{\phi} (x^n,t_{k(n)})}{\dt_{n}} \right\}=- H(\nabla \phi(x^\ast,t^\ast))+ \partial_t \phi(x^\ast,t^\ast),$$
and, hence, \eqref{1conver} gives
%
$$
-\partial_t \phi(x^\ast,t^\ast) +  H(\nabla \phi(x^\ast,t^\ast)) \leq f(x^\ast,m(t^\ast)),
$$
which proves that $U^\ast$ is a subsolution to \eqref{limit_hjb}. An analogous argument shows that $U_\ast$ is a supersolution to \eqref{limit_hjb}.
Assumptions  {\bf(H3)}{\rm(i)-\rm(ii)} ensure a comparison principle for  \eqref{limit_hjb} (see \cite[Theorem 2.1]{MR2784834}). Therefore, since $U^\ast(\cdot,T)= U_\ast(\cdot,T)$ by Lemma \ref{envelope_properties}{\rm(ii)}, we have  that $U^\ast=U_\ast=u$ as announced. Using this result, the proof of \eqref{uniform_convergence_on_grid_points} is identical to the proof of \cite[Chapter V, Lemma 1.9]{MR1484411}.
\end{proof}
We have now all the elements to prove the main result in this article. We will need an additional assumption over $\ell$, $f$ and $g$.\medskip\\
{\bf(H4)} We assume that: \smallskip\\
{\rm(i)} The function $\ell$ is convex.  \smallskip\\
{\rm(ii)} There exists  $C>0$ and a modulus of continuity $\omega:[0,+\infty) \to [0,+\infty)$  such that for $h=f$, $g$ we have
\begin{equation}\label{additional_assumption} 
|h(x,m) - h(x,m')| \leq C(1+ |x|^q) \omega\left(d_{1}(m,m')\right) \hspace{0.3cm} \forall \; x\in \R^d, \; m, \; m' \in \mes_1(\R^d). 
\end{equation}  \vspace{0.0001cm}

\begin{theorem}\label{ConvergenceMainTheorem} Suppose that {\bf(H3)}-{\bf(H4)} hold and that, as $n\to \infty$,  $N^t_n/N^s_n \to 0$ and $\epsilon_n = o\left(\frac{1}{N^t_{n}\log   (N^s_n)} \right)$. Then, the following assertions hold true:  \smallskip\\
{\rm(i)} There exists at least one limit point $\xi^\ast$ of $(\xi_n)$, with respect to the narrow topology in $\mes(\Gamma)$, and every such limit point   is a MFG equilibrium for \eqref{family_of_optimal_control_problems}.\smallskip\\
{\rm(ii)} Consider any converging subsequence of $(\xi_{n'})$ of $(\xi_{n})$, with limit $\xi^{\ast} \in \mes(\Gamma)$, and let $(U_{n'}, M_{n'})$ be the associated solutions to \eqref{introsFiniteMFGNashEinConvQuestion}. Denote by $u$ be the unique viscosity solution to \eqref{limit_hjb} with $m(t):=e_t \sharp \xi^\ast$ for all $t\in [0,T]$. Then, the sequence $(M_{n'})\subseteq C([0,T]; \mes_1(\R^d))$, defined by \eqref{interpolations}, converge to $m$ in $C([0,T]; \mes_1(\R^d))$ and \eqref{uniform_convergence_on_grid_points} holds for $(U_{n'})$ and $u$.
 
%
%
%
%
\end{theorem}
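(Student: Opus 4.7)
The plan is to extract a common convergent subsequence from the three available compactness/convergence results and then identify the limit as an MFG equilibrium. Under the standing hypotheses, Propositions \ref{compactness_of_Pn}, \ref{equicontinuity_Mn_proposition} and \ref{convergence_of_the_value_functions} are all in force, so up to a subsequence (relabelled by $n$) I may assume that $\xi_n \to \xi^\ast$ narrowly in $\mes(\Gamma)$, that $M_n \to m$ in $C([0,T];\mes_1(\R^d))$ and that \eqref{uniform_convergence_on_grid_points} holds for $U_n$ and the unique viscosity solution $u$ of \eqref{limit_hjb} associated with $m$. Since $M_n(t) = e_t\sharp\xi_n$ on all of $[0,T]$ by \eqref{interpolations}, narrow convergence of the time marginals forces $m(t) = e_t\sharp\xi^\ast$ for every $t\in[0,T]$ and $e_0\sharp\xi^\ast = m_0$, so that $\xi^\ast\in\mes_{m_0}(\Gamma)$. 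Assertion (ii) then follows automatically once (i) is established, because the argument for (i) below does not privilege any particular converging subsequence.

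To prove (i) I will use the reformulation in Remark \ref{redefinition_of_mfg_equilibrium}: it suffices to show that $J_m(\y) = u(\y(0),0)$ for $\xi^\ast$-almost every $\y$, where
\begin{equation*}
J_m(\y) := \int_0^T \bigl[\ell(\dot\y(t)) + f(\y(t),m(t))\bigr]\,\d t + g(\y(T),m(T))
\end{equation*}
for $\y\in W^{1,q}([0,T];\R^d)$, and $+\infty$ otherwise. Since the representation formula \eqref{representation_formula_u} guarantees $J_m(\y) \geq u(\y(0),0)$ pointwise, this reduces to the averaged inequality $\int_{\Gamma} J_m\,\d\xi^\ast \leq \int_{\R^d} u(x,0)\,\d m_0(x)$.

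To obtain this averaged bound I will combine the finite-MFG dynamic programming identity with a lower semicontinuity argument. Multiplying \eqref{NEminimizer1} at $k=0$ by $M_{n,0}(x)$, summing in $x$, and applying \eqref{quotient_is_derivative} to rewrite the discrete kinetic sum as $\int_0^T \ell(\dot\y(t))\,\d t$ along the piecewise affine trajectories charged by $\xi_n$, one arrives at
\begin{equation*}
\int_{\mystates_n} U_n(x,0)\,\d M_{n,0}(x) = \E_{\xi_n}\!\left[\int_0^T \ell(\dot\y)\,\d t + \dt_n\sum_{k=0}^{N_n^t-1} f(\y(t_k),M_n(t_k)) + g(\y(T),M_n(T))\right] + r_n,
\end{equation*}
where $r_n$ collects the entropy remainder and vanishes as $n\to\infty$ by \eqref{bound_on_the_entropy_term} combined with the sharper assumption $\epsilon_n = o(1/(N_n^t\log N_n^s))$, which kills $\epsilon_n$ times a quantity of order $N_n^t\log N_n^s$. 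The left-hand side converges to $\int u(x,0)\,\d m_0(x)$ by \eqref{uniform_convergence_on_grid_points}, narrow convergence $M_{n,0}\to m_0$ and continuity of $u(\cdot,0)$. A Riemann-sum approximation based on {\bf(H4)}{\rm(ii)} then shows that the right-hand side, minus $r_n$, equals $\int_{\Gamma} J_m\,\d\xi_n + o(1)$: the modulus $\omega$ is applied to $d_1(M_n(t_k),m(t))$, which is $o(1)$ uniformly in $t$ by $M_n\to m$ and the equicontinuity estimate \eqref{equicontinuity_Mn}, the discretization error $|\y(t_k)-\y(t)|$ is controlled by H\"older's inequality against \eqref{bounded_derivatives_in_expected_value}, and the spurious factor $1+|\y|^q$ is absorbed in $\xi_n$-expectation by the moment bound \eqref{second_order_bounded_moments}. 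Consequently $\lim_n \int_{\Gamma} J_m\,\d\xi_n = \int_{\R^d} u(x,0)\,\d m_0(x)$.

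To close the argument, convexity of $\ell$ in {\bf(H4)}{\rm(i)} together with the coercivity \eqref{polynomial_growth_ell} makes the kinetic integral $\y\mapsto \int_0^T\ell(\dot\y(t))\,\d t$ lower semicontinuous on $\Gamma$ when extended by $+\infty$ off $W^{1,q}$: a uniformly convergent sequence with bounded kinetic energy is weakly convergent in $W^{1,q}$ and the convex integral is weakly lower semicontinuous. The remaining part of $J_m$ is continuous on $\Gamma$ in view of \eqref{bounds_on_f_and_g}, and $J_m$ is bounded below. Portmanteau's theorem applied to $\xi_n\to\xi^\ast$ therefore yields $\int_{\Gamma} J_m\,\d\xi^\ast \leq \liminf_n \int_{\Gamma} J_m\,\d\xi_n = \int_{\R^d} u(x,0)\,\d m_0$, which is the averaged inequality sought, hence (i). The step I expect to be the most delicate is this Riemann-sum replacement inside $f$: the modulus in \eqref{additional_assumption} carries a factor $1+|x|^q$ that is only controlled in $\xi_n$-expectation, so it must be combined carefully with the temporal regularity of the trajectories charged by $\xi_n$ and with the uniform moment bound; once this is handled, all the remaining ingredients (Portmanteau, viscosity uniqueness, Remark \ref{redefinition_of_mfg_equilibrium}) are standard.
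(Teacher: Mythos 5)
Your proposal is correct and follows essentially the same route as the paper's proof: the same dynamic-programming identity at $k=0$ summed against $M_{n,0}$ with the entropy remainder killed by $\epsilon_n = o\left(1/(N^t_n\log N^s_n)\right)$, the same Riemann-sum replacement of $f(\y(t_k),M_n(t_k))$ by $f(\y(t),m(t))$ via {\bf(H3)}{\rm(ii)}, {\bf(H4)}{\rm(ii)} and the moment bounds, and the same lower-semicontinuity argument for $\y \mapsto \int_0^T \ell(\dot\y(t))\,\d t$ combined with the pointwise inequality $u(\y(0),0)\leq \int_0^T[\ell(\dot\y)+f(\y,m)]\,\d t + g(\y(T),m(T))$ to force equality $\xi^\ast$-almost everywhere. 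The only cosmetic difference is that you apply a Portmanteau-type lower-semicontinuity bound to the full cost functional at once, whereas the paper applies it only to the kinetic term and passes to the limit in the $f$- and $g$-terms by bounded continuity.
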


\begin{proof} Assertion {\rm(ii)} is a straightforward consequence of the first assertion and  Proposition \ref{convergence_of_the_value_functions}, hence, we only need to prove {\rm(i)}.  The existence of at least one limit point $\xi^\ast$ of $(\xi_n)$ is a consequence of Proposition \ref{compactness_of_Pn}. Let us still index by $n$ a subsequence of $(\xi_n)$ narrowly converging to $\xi^\ast$. By Proposition \ref{equicontinuity_Mn_proposition}, we have that $m(\cdot):= e_{(\cdot)} \sharp \xi^\ast$ is the limit in $C([0,T]; \mes_1(\R^d))$ of $M_n$. 
By definition of $\xi_n$ and our condition over $\epsilon_n$, we have
\begin{equation}\label{eq::18}
\E_{\xi_n } \left(  \int_{0}^{T}  \left[\ell(\dot{\y}(t))   +  f(\y([t]_{\time_n}),M_n ([t]_{\time_n})) \right] \; \d t + g(\y(T) , M_n (T)) \right) + o(1) = \sum_{x \in \mystates_n} U_n(x,0) M_{n,0}(x)  ,
\end{equation} 
where $[t]_{\time_n}$ is the greatest element in $\time_n$ not larger than $t$. Using that the support of $M_{n,0}$ is uniformly bounded and relation \eqref{uniform_convergence_on_grid_points} in Proposition \ref{convergence_of_the_value_functions}, we easily get that the right hand side above converges to $\int_{\R^d} u(x,0) \d m_0(x)= \E_{\xi^\ast}\left( u(\gamma(0),0)\right)$, where $u$ is the unique viscosity solution to \eqref{limit_hjb}.  On the other hand, arguing as in the proof of Lemma \ref{compactness_sets_gamma_c}, the lower bound in \eqref{polynomial_growth_ell} and the convexity of $\ell$ imply that the mapping 
$$   \Gamma \ni \gamma \mapsto  
\begin{cases}
\int_{0}^{T} \ell(\dot{\y}) \; \d t , \quad &\text{if } \y \in W^{1,q}([0,T] ; \R^d), \\
+\infty  &\text{otherwise,}
\end{cases}
$$
is  lower semicontinuous. Therefore, by \cite[Lemma 5.1.7]{AGS} and \eqref{bounded_derivatives_in_expected_value}, we have
\begin{equation}\label{eq::19}
 \E_{\xi^\ast} \left(  \int_{0}^{T}\ell(\dot{\y}(t)) \; \d t \right) \leq \liminf_{n\to \infty} \E_{\xi_n } \left(  \int_{0}^{T} \ell(\dot{\y}(t)) \; \d t\right)<\infty,
\end{equation}
which, together with the lower bound in \eqref{polynomial_growth_ell}, implies that the support of $\xi^\ast$ is contained in $W^{1,q}([0,T];\R^d)$.  By assumption  {\bf(H3)}{\rm(ii)}, for all $k=0, \hdots, N^t_{n}-1$ we have that 
\begin{equation}\label{estimate_difference_of_f_small_interval}
\left|\E_{\xi_n}\left(\int_{t_{k}}^{t_{k+1}}\left[ f(\y(t_k),M_n (t_k))-f(\y(t),M_n (t_k)) \right] \d t \right) \right| \leq C\E_{\xi_n}\left(\int_{t_{k}}^{t_{k+1}} \left|\y(t)-\y(t_k)\right| \d t \right).
\end{equation}
Since $\gamma(t)= \gamma(t_k) + \dot{\gamma}(t)(t-t_k)$ for $\xi_n$-almost all $\gamma$ and all $t\in (t_k,t_{k+1})$, the bound \eqref{bounded_derivatives_in_expected_value} gives
$$\E_{\xi_n}\left(\int_{t_{k}}^{t_{k+1}} \left|\y(t)-\y(t_k)\right| \d t \right)= \Delta t_n (\Delta t_n)^{\frac{1}{q'}} \left[\E_{\xi_n}\left( \int_{0}^{T} |\dot{\gamma}(t)|^q \d t \right) \right]^{\frac{1}{q}}\leq C(\Delta t_n)^{1+\frac{1}{q'}}, $$
for some constant $C>0$. Thus,  by \eqref{estimate_difference_of_f_small_interval}, 
$$
\E_{\xi_n } \left(  \int_{0}^{T} f(\y([t]_{\time_n}),M_n ([t]_{\time_n}))   \d t \right) = \E_{\xi_n } \left(  \int_{0}^{T} f(\y(t),M_n ([t]_{\time_n}))   \d t \right) + o(1).
$$
The relation above and \eqref{additional_assumption} yield 
\begin{equation}\label{limit_term_with_f}
\begin{array}{rl}
\E_{\xi_n } \left(  \int_{0}^{T} f(\y([t]_{\time_n}),M_n ([t]_{\time_n}))   \d t \right) =&  \E_{\xi_n } \left(  \int_{0}^{T} f(\y(t),  m(t))   \d t \right)\\[8pt]
\; &+ C\left( 1+ \sup_{t\in [0,T]} \E_{\xi_n }(|\y(t)|^q) \right)\sup_{t\in [0,T]}\omega\left(\d_1 (M_n ([t]_{\time_n}) , m(t))\right)\\[8pt]
\; &  + o(1)\\[8pt]
= & \;   \E_{\xi_n } \left(  \int_{0}^{T} f(\y(t),m(t))   \d t \right) + o(1), 
\end{array}
\end{equation}
where, in the last equality, we have used \eqref{second_order_bounded_moments} and the fact that $M_n \to m$ in $C([0,T]; \mes_1(\R^d))$. Analogously, 
\begin{equation}\label{limit_term_with_g}
\E_{\xi_n } \left( g(\y(T) , M_n (T)) \right)=\E_{\xi_n } \left( g(\y(T) , m (T)) \right) + o(1).
\end{equation}
Therefore, passing to the limit $n \to \infty$ in \eqref{eq::18} and using \eqref{eq::19}, \eqref{limit_term_with_f} and \eqref{limit_term_with_g}, we get
\begin{equation}\label{limit_inequality}
\E_{\xi^\ast} \left(  \int_{0}^{T}  \left[\ell(\dot{\y}(t))   +  f(\y(t),m (t)) \right] \; \d t + g(\y(T) , m(T)) \right) \leq  \E_{\xi^\ast}\left( u(\gamma(0),0)\right).
\end{equation}
Since, by definition, 
$$u(\gamma(0),0) \leq  \int_{0}^{T}  \left[\ell(\dot{\y}(t))   +  f(\y(t),m (t)) \right] \; \d t + g(\y(T) , m(T)) \hspace{0.3cm} \forall \; \y \in W^{1,q}([0,T];\R^d),$$
inequality \eqref{limit_inequality} implies that for $\xi^{\ast}$-almost all $\gamma$ we have that 
$$u(\gamma(0),0)= \int_{0}^{T}  \left[\ell(\dot{\y}(t))   +  f(\y(t),m (t)) \right] \; \d t + g(\y(T) , m(T)),$$
i.e. $\xi^\ast$ is a MFG equilibrium for \eqref{family_of_optimal_control_problems} (see Remark \ref{redefinition_of_mfg_equilibrium}).
\end{proof} 

Finally, let us recall the relationship between the MFG equilibrium $\xi^\ast$, defined in terms of probability measures on $\Gamma$ in Definition \ref{definition_mfg_equilibrium},  and the first order MFG system introduced by Lasry and Lions in \cite[Section 2.5]{LL07mf}. The latter is given by
$$\left.\begin{array}{rcl} - \partial_t u  + H(\nabla u) &=& f(x, m(t) ) \hspace{0.3cm} \mbox{in } \R^d \times (0,T), \\[6pt]
					\partial_t m - \mbox{div}\left(  \nabla H(\nabla u) m  \right) &=& 0 \hspace{0.3cm} \mbox{in } \R^d \times (0,T), \\[6pt]
					u(\cdot,T)=g(\cdot,m(T))\hspace{0.2cm} \mbox{in $\R^d$}, & \; & m(0)= m_0.
\end{array} \right\} \eqno(MFG)
$$
We say that $(u,m)$ solves $(MFG)$ if $u$ is continuous, Lipschitz w.r.t. its first argument, $m \in \C([0,T]; \mes_1(\R^d))$, the first equation is satisfied in the viscosity sense and the second one  is satisfied in the sense of distributions. 

We will need the additional assumption  \medskip\\
{\bf(H5)} The following assertions hold true: \smallskip\\
{\rm(i)} The function $\ell$ is $\C^2$, the growth condition \eqref{polynomial_growth_ell} is satisfied with $q=2$,  and for  $h=f$, $g$ we have that  $h(\cdot, m)$ is $\C^2$, for every $m\in \mes_1(\R^d)$, and there exists $C>0$ such that
\begin{equation}\label{bounds_on_f_and_g} \sup_{m \in \mes_1 (\R^d) } \left\{   \norm h(\cdot , m)\norm_{\infty} + \norm D_x h(\cdot , m) \norm_{\infty}+\norm D_{xx}^2 h(\cdot , m) \norm_{\infty} \right\} \leq C.
\end{equation} 
{\rm(ii)} The initial distribution $m_0$ is absolutely continuous and its density  belongs to $L^{\infty}(\R^d)$. 
\smallskip\\

Under assumptions {\bf(H3)} and  {\bf(H5)}, there exists at least one solution $(u,m)$ to $(MFG)$ (see \cite{LL07mf,Cdga13}). Moreover, this solution is unique under the following monotonicity assumption on $f$ and $g$:  
\begin{equation}\label{monotonicity_continuous_mfg}
\mbox{For $h=f$, $g$, we have  } \hspace{0.2cm}\int_{\R^d}\left[h(x,m)  -h(x,m') \right] \d (m-m')(x) \geq 0   \hspace{0.5cm} \forall \; m, \; m' \in \mes_1(\R^d).
\end{equation}
If $(u,m)$ is a solution of $(MFG)$, the results in \cite[Chapter 6]{CannarsaSinestrari} imply that for almost all $x\in \R^d$ the equality \eqref{representation_formula_u} holds and 
\begin{equation}\label{gamma_x_minimizer}
u(x,0)= \int_{0}^{T}  \left[\ell(\dot{\overline \y}^x(t))   +  f(\overline \y^x(t),m (t)) \right] \; \d t + g(\overline \y^x(T) , m(T)),
\end{equation}
where $\overline \gamma^x$ is the unique solution to 
\begin{equation}\label{equation_feedback_form}
\dot{\gamma}(t)= - \nabla H(\nabla u(\gamma(t), t)) \; \; \; t \in (0,T), \hspace{0.5cm} \gamma(0)=x. 
\end{equation}
Moreover, $\overline \gamma^x$ is the only curve in $W^{1,2}([0,T]; \R^d)$ such that \eqref{gamma_x_minimizer} holds. By considering a measurable selection of the set 
\begin{equation}\label{set_of_curves_to_be_chosen}\left\{ \gamma^x \in W^{1,2}([0,T]; \R^d) \; | \; \gamma^x \; \mbox{satisfies \eqref{gamma_x_minimizer}}, \; x\in \R^d\right\},
\end{equation}
(and so $\gamma^x= \overline{\y}^x$ for a.e. $x\in \R^d$) and using that the second equation in $(MFG)$ admits a unique solution (thanks to \cite[Theorem 8.2.1]{AGS}) if we define $\xi^{\ast}:= \gamma^{(\cdot)} \sharp m_0 \in \mes(\Gamma)$, we have that $\xi^\ast$ is a MFG equilibrium in the sense of Definition \ref{definition_mfg_equilibrium}. Conversely, given a MFG equilibrium $\xi^\ast$, setting $m(t):= e_t \sharp \xi^\ast$ and defining $u$ by \eqref{representation_formula_u},   the first equation in $(MFG)$ and the boundary condition at time $T$ are satisfied. Moreover,  the results in  \cite[Chapter 6]{CannarsaSinestrari} imply that $\xi^{\ast}:= \gamma^{(\cdot)} \sharp m_0 \in \mes(\Gamma)$, where $\gamma^{(\cdot)}$ is a measurable selection of curves in \eqref{set_of_curves_to_be_chosen}. Therefore,   $m$ solves the second equation in $(MFG)$ in the distributional sense. 

By the previous remarks, we have the following consequence of Theorem \ref{ConvergenceMainTheorem}. 
\begin{corollary}\label{convergence_to_mfg_solution_pde_system} Suppose that {\bf(H3)}, {\bf(H4)} and {\bf(H5)} hold and that, as $n\to \infty$,  $N^t_n/N^s_n \to 0$ and $\epsilon_n = o\left( 1/\left(N^t_{n}\log   (N^s_n)\right) \right)$.  Then,   associated  to every   limit point $m$ of $(M_n)$ in $C([0,T]; \mes_1(\R^d))$  {\rm(}there exists at least one{\rm)}, there exists $u \in C( \R^d \times [0,T] )$, Lipschitz w.r.t. its first variable,  and a subsequence of $(M_n, U_n)$, which we still index by $n$, such that $(u,m)$ solves $(MFG)$,  $M_n \to m$ in $C([0,T]; \mes_1(\R^d))$ and $(U_n,u)$ satisfies \eqref{uniform_convergence_on_grid_points}. 
\end{corollary}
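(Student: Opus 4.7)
\medskip
\noindent\textbf{Proof proposal.} The plan is to combine Theorem \ref{ConvergenceMainTheorem} with the equivalence between MFG equilibria in the sense of Definition \ref{definition_mfg_equilibrium} and solutions of the PDE system $(MFG)$ recalled in the paragraphs preceding the statement.

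First, I would fix a limit point $m$ of $(M_n)$ in $C([0,T];\mes_1(\R^d))$ and extract a subsequence with $M_{n'}\to m$. By Proposition \ref{compactness_of_Pn} the sequence $(\xi_{n'})$ is relatively compact in $\mes(\Gamma)$ for the narrow topology, so along a further subsequence $\xi_{n''}\to \xi^\ast$ narrowly. Continuity of the evaluation maps $e_t$ together with the uniform moment bound \eqref{second_order_bounded_moments} upgrades narrow convergence of $e_t\sharp \xi_{n''}$ to convergence in $\mes_1(\R^d)$, forcing $m(t)=e_t\sharp \xi^\ast$ for every $t\in[0,T]$. Theorem \ref{ConvergenceMainTheorem} then ensures that $\xi^\ast$ is an MFG equilibrium for \eqref{family_of_optimal_control_problems} and that, setting $u$ to be the unique viscosity solution of \eqref{limit_hjb} associated with this $m$, the sequence $U_{n''}$ converges to $u$ in the sense of \eqref{uniform_convergence_on_grid_points}.

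It remains to verify that $(u,m)$ solves $(MFG)$. The Hamilton--Jacobi equation and the terminal condition are satisfied by construction of $u$. Under {\bf(H5)}{\rm(i)} the Lagrangian $\ell$ is $C^2$ with quadratic growth, hence the Hamiltonian $H$ is $C^1$, and the strengthened bounds \eqref{bounds_on_f_and_g} give uniform Lipschitz and semiconcavity estimates on $f(\cdot,m(t))$ and $g(\cdot,m(T))$. Classical value-function theory (see e.g.\ \cite{CannarsaSinestrari}) then yields that $u$ is Lipschitz in $x$ uniformly in $t$ and semiconcave in $x$. Semiconcavity of $u$, together with the absolute continuity of $m_0$ from {\bf(H5)}{\rm(ii)}, implies that $u(\cdot,t)$ is differentiable $m_0$-a.e.\ and that the feedback ODE $\dot{\gamma}=-\nabla H(\nabla u(\gamma,t))$ admits, for $m_0$-a.e.\ $x$, a unique solution $\overline{\y}^x$, which is also the unique minimizer of the Bolza problem starting from $x$.

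Finally, since $\xi^\ast$ is concentrated on optimal trajectories and uniqueness of the optimizer holds $m_0$-a.e., one obtains $\xi^\ast=\y^{(\cdot)}\sharp m_0$ for a measurable selection $\y^{(\cdot)}$ of minimizing curves. Applying \cite[Theorem 8.2.1]{AGS} to the continuity equation with velocity field $-\nabla H(\nabla u)$ then shows that $m(t)=e_t\sharp \xi^\ast$ solves $\partial_t m - \mathrm{div}(\nabla H(\nabla u)m)=0$ in the distributional sense with $m(0)=m_0$, concluding that $(u,m)$ is a solution of $(MFG)$. The main technical point is the measurable selection of optimal trajectories and the derivation of the distributional continuity equation from semiconcavity (rather than $C^1$ regularity) of $u$; however, this is exactly the content of the argument recalled immediately before the corollary, so the proof reduces to assembling these ingredients along the chosen subsequences.
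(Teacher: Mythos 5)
Your proposal is correct and follows essentially the same route as the paper: the paper gives no separate proof but states that the corollary follows from Theorem \ref{ConvergenceMainTheorem} combined with the equivalence between MFG equilibria in the sense of Definition \ref{definition_mfg_equilibrium} and solutions of $(MFG)$, which is exactly the equivalence (via the representation formula \eqref{representation_formula_u}, the results of \cite[Chapter 6]{CannarsaSinestrari} on uniqueness of minimizing curves for a.e.\ initial point, the measurable selection in \eqref{set_of_curves_to_be_chosen}, and \cite[Theorem 8.2.1]{AGS}) that you assemble. Your write-up merely makes explicit the subsequence extraction and the identification $m(\cdot)=e_{(\cdot)}\sharp\xi^\ast$, which the paper leaves implicit.
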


\begin{remark} Under the previous assumptions,  the convergence results in Theorem \ref{ConvergenceMainTheorem} and in Corollary \ref{convergence_to_mfg_solution_pde_system} hold for the entire sequence {\rm(}i.e. without need of extracting a subsequence{\rm)} if the solution to $(MFG)$ is unique. This holds true under the following monotonicity assumption on $h=f$, $g$ {\rm(}see {\rm\cite{LL07mf})}
$$
\int_{\R^d} \left( h(x,m) - h(x,m') \right) \d (m-m')(x) \geq 0 \hspace{0.5cm} \forall \; m, \; m' \in \mes_1(\R^d). 
$$
\end{remark}

\def\bibname{References}
\bibliographystyle{abbrv}
\bibliography{biblio}

\begin{thebibliography}{10}

\bibitem{AGS}
L.~Ambrosio, N.~Gigli, and G.~Savar{\'e}.
\newblock {\em Gradient flows: in metric spaces and in the space of probability
  measures}.
\newblock Springer Science \& Business Media, 2008.

\bibitem{MR1484411}
M.~Bardi and I.~Capuzzo-Dolcetta.
\newblock {\em Optimal control and viscosity solutions of
  {H}amilton-{J}acobi-{B}ellman equations}.
\newblock Systems \& Control: Foundations \& Applications. Birkh\"auser Boston,
  Inc., Boston, MA, 1997.
\newblock With appendices by Maurizio Falcone and Pierpaolo Soravia.

\bibitem{MR1485734}
M.~Bardi and F.~Da~Lio.
\newblock On the {B}ellman equation for some unbounded control problems.
\newblock {\em NoDEA Nonlinear Differential Equations Appl.}, 4(4):491--510,
  1997.

\bibitem{MR1115933}
G.~Barles and P.~E. Souganidis.
\newblock Convergence of approximation schemes for fully nonlinear second order
  equations.
\newblock {\em Asymptotic Anal.}, 4(3):271--283, 1991.

\bibitem{MR3644590}
J.-D. Benamou, G.~Carlier, and F.~Santambrogio.
\newblock Variational mean field games.
\newblock In {\em Active particles. {V}ol. 1. {A}dvances in theory, models, and
  applications}, Model. Simul. Sci. Eng. Technol., pages 141--171.
  Birkh\"auser/Springer, Cham, 2017.

\bibitem{BrownFictitiousPlay}
G.~W. Brown.
\newblock Iterative solution of games by fictitious play.
\newblock {\em Activity analysis of production and allocation}, 13(1):374--376,
  1951.

\bibitem{Cannarsa_Capuani_17}
P.~Cannarsa and R.~Capuani.
\newblock Existence and uniqueness for {M}ean {F}ield {G}ames with state
  constraints.
\newblock {\em Preprint}, 2017.

\bibitem{CannarsaSinestrari}
P.~Cannarsa and C.~Sinestrari.
\newblock {\em Semiconcave functions, Hamilton-Jacobi equations, and optimal
  control}, volume~58.
\newblock Springer Science \& Business Media, 2004.

\bibitem{CDnotesonMFG}
P.~Cardaliaguet.
\newblock Notes on mean field games.
\newblock Technical report, Technical report, 2010.

\bibitem{Cdga13}
P.~Cardaliaguet.
\newblock Long time average of first order mean field games and weak {KAM}
  theory.
\newblock {\em Dynamic Games and Applications}, 3(4):473--488, 2013.

\bibitem{CDLL}
P.~Cardaliaguet, F.~Delarue, J.-M. Lasry, and P.-L. Lions.
\newblock The master equation and the convergence problem in mean field games.
\newblock {\em arXiv preprint arXiv:1509.02505}, 2015.

\bibitem{CDHD2015}
P.~Cardaliaguet and S.~Hadikhanloo.
\newblock Learning in mean field games: {T}he fictitious play.
\newblock {\em ESAIM: Control, Optimisation and Calculus of Variations},
  23(2):569--591, 2017.

\bibitem{MR2784834}
F.~Da~Lio and O.~Ley.
\newblock Convex {H}amilton-{J}acobi equations under superlinear growth
  conditions on data.
\newblock {\em Appl. Math. Optim.}, 63(3):309--339, 2011.

\bibitem{FL98}
D.~Fudenberg and D.~K. Levine.
\newblock {\em The theory of learning in games}, volume~2.
\newblock MIT press, 1998.

\bibitem{GomeMohrSouza}
D.~A. Gomes, J.~Mohr, and R.~R. Souza.
\newblock Discrete time, finite state space mean field games.
\newblock {\em Journal de Math{\'e}matiques Pures et Appliqu{\'e}es},
  93(3):308--328, 2010.

\bibitem{MR3559742}
D.~A. Gomes, E.~A. Pimentel, and V.~Voskanyan.
\newblock {\em Regularity theory for mean-field game systems}.
\newblock SpringerBriefs in Mathematics. Springer, [Cham], 2016.

\bibitem{MR3195844}
D.~A. Gomes and J.~Sa\'ude.
\newblock Mean field games models---a brief survey.
\newblock {\em Dyn. Games Appl.}, 4(2):110--154, 2014.

\bibitem{GLL2010}
O.~Gu{\'e}ant, J.-M. Lasry, and P.-L. Lions.
\newblock Mean field games and applications.
\newblock {\em Paris-Princeton lectures on mathematical finance 2010},
  2003:205--266, 2011.

\bibitem{hadikhanloo2017learning}
S.~Hadikhanloo.
\newblock Learning in anonymous nonatomic games with applications to
  first-order mean field games.
\newblock {\em arXiv preprint arXiv:1704.00378}, 2017.

\bibitem{HCMieeeAC06}
M.~Huang, R.~P. Malham{\'e}, and P.~E. Caines.
\newblock Large population stochastic dynamic games: closed-loop
  {M}c{K}ean-{V}lasov systems and the {N}ash certainty equivalence principle.
\newblock {\em Communications in Information \& Systems}, 6(3):221--252, 2006.

\bibitem{LL06cr1}
J.-M. Lasry and P.-L. Lions.
\newblock Jeux {\`a} champ moyen. i--le cas stationnaire.
\newblock {\em Comptes Rendus Math{\'e}matique}, 343(9):619--625, 2006.

\bibitem{LL06cr2}
J.-M. Lasry and P.-L. Lions.
\newblock Jeux {\`a} champ moyen. ii--horizon fini et contr{\^o}le optimal.
\newblock {\em Comptes Rendus Math{\'e}matique}, 343(10):679--684, 2006.

\bibitem{LL07mf}
J.-M. Lasry and P.-L. Lions.
\newblock Mean field games.
\newblock {\em Japanese journal of mathematics}, 2(1):229--260, 2007.

\bibitem{cursolions}
P.-L. Lions.
\newblock \textit{Cours au {C}oll\`ege de {F}rance}.
\newblock {\em {\rm www.college-de-france.fr}}, 2007-2008.

\bibitem{Villani03}
C.~Villani.
\newblock {\em Topics in {O}ptimal {T}ransportation}.
\newblock Vol. 58 of {G}raduate {S}tudies in {M}athematics. American
  {M}athematical {S}ociety, Providence, RI, 2003.

\end{thebibliography}
\end{document}